 \def\l@subsection{\@tocline{2}{0pt}{4pc}{6pc}{}}
\def\l@subsubsection{\@tocline{3}{0pt}{8pc}{8pc}{}}
\newcommand{\Z}{\mathbb{Z}}
\newcommand{\C}{\mathbb{C}}
\newcommand{\R}{\mathbb{R}}
\newcommand{\Hi}{\mathcal{H}}
\newcommand{\Li}{\mathcal{L}}
\newcommand{\X}{\mathsf{X}}
\newcommand{\Y}{\mathsf{Y}}
\newcommand{\V}{\mathsf{V}}
\newcommand{\cj}{\overline}
\newcommand{\op}{\mathrm}
\newcommand{\B}{\op{Ba}}
\DeclarePairedDelimiterX{\normb}[1]{\lVert}{\rVert}{#1}
\numberwithin{equation}{section}
\newtheorem{theorem}{Theorem}[section]
\newtheorem{lemma}[theorem]{Lemma}
\newtheorem{proposition}[theorem]{Proposition}
\newtheorem{corollary}[theorem]{Corollary}
\renewcommand{\qedsymbol}{$\blacksquare$}
\let\origproofname\proofname
\renewcommand{\proofname}{\upshape\textbf{\origproofname}}
\newcommand{\vf}{\mathbf}
\newtheorem*{theorem*}{Theorem}
\newtheorem*{claim}{Claim}
\theoremstyle{definition}
\newtheorem{example}[theorem]{Example}
\newtheorem{definition}[theorem]{Definition}
\newtheorem{remark}[theorem]{Remark}
\newtheorem{notation}[theorem]{Notation}
\newenvironment{claimproof}[1]{\par\noindent\textbf{Proof of Claim.}\space#1}{\hfill\qedsymbol}
\begin{document}

\title{C*-like modules and matrix $p$-operator norms}

\author{Alessandra Calin}

\author{Ian Cartwright}

\author{Luke Coffman}

\author{Alonso Delfín}

\author{Charles Girard}

\author{Jack Goldrick}

\author{Anoushka Nerella}

\author{Wilson Wu} 

\email[]{alca6792@colorado.edu, iaca9177@colorado.edu, lukecoffman@fas.harvard.edu, alonso.delfin@colorado.edu, chgi9257@colorado.edu, 
jago5326@colorado.edu, sane5566@colorado.edu, wiwu2390@colorado.edu}
\address{Department of Mathematics, University of Colorado, Boulder CO 80309, USA}

\renewcommand{\shortauthors}{Calin, 
Cartwright, Coffman, Delfín, Girard, Goldrick, Nerella, and Wu}

\date{\today}

\subjclass[2020]{Primary 46H15, 46H35; Secondary 	15A60 , 47L10}
%This material is based upon work supported by the
%US National Science Foundation under
%  Grant DMS}

\keywords{Hölder duality, $L^p$-modules, Matrix operator norms}

\begin{abstract}
We present a generalization of Hölder duality to algebra-valued pairings via $L^p$-modules. 
Hölder duality states that if $p \in (1, \infty)$ and $p’$ are conjugate exponents, then the dual space of $L^p(\mu)$ is isometrically isomorphic to $L^{p’}(\mu)$. In this work, we study certain pairs $(\Y,\X)$, as generalizations of the pair $(L^{p'}(\mu), L^p(\mu))$, that have an $L^p$-operator algebra valued pairing $\Y \times \X \to A$. When the $A$-valued version of Hölder duality still holds, we say that $(\Y, \X)$ is C*-like. We show that finite and countable direct sums of the C*-like module $(A,A)$ are still C*-like when $A$ is any block diagonal subalgebra of $d \times d$ matrices. We provide counterexamples when $A \subset M_d^p(\C)$ is not block diagonal. 
\end{abstract}

\maketitle

\tableofcontents

\section{Introduction}

Hilbert C*-modules (see \cite{Lance95} for a general introduction) have been widely used 
as an operator algebraic tool. 
These objects enjoy a great deal of similarities with Hilbert spaces and therefore Hilbert C*-modules are often thought of as a generalization of Hilbert spaces. On the other hand, Hilbert spaces can
also be naturally generalized by $L^p$-spaces. 

In the past 15 years, a growing number of authors have worked on generalizations 
of C*-algebraic constructions to algebras 
of operators acting on $L^p$-spaces. This is an area now known as ``$L^p$-operator algebras''. For instance, there are $p$-analogs of: AF algebras in \cite{GLUP16,PhillipsViola20}, Cuntz algebras in \cite{DawsHor22,ncp2012AC}, graph algebras in \cite{CortRod,gardeTin25}, group, groupoid, and crossed product algebras in \cite{Bardadyn_2024,ChoiGardThiel, GLUP17,GT15,GT19,HetOrt23,NCP13}, Morita equivalence in \cite{Chung2024}, multiplier algebras in \cite{BliDelWe24}, operator spaces in \cite{Daws10,LM96}, and spectral triples in \cite{DFP24}. 

In this paper, we work with natural generalizations of concrete Hilbert C*-modules to the $L^p$-setting. These objects 
are called $L^p$-modules and roughly can be defined as a pair that, on one side generalizes the fact that every Hilbert space is a right Hilbert $\C$-module, and on the other side generalizes that every Hilbert space is an $L^p$-space. To be more precise, let $p, p'$ be Hölder conjugate exponents, that is let $p, p' \in [1, \infty]$ satisfy
\[
\frac{1}p + \frac{1}{p'} =1.
\]
Then, any Hilbert space $\Hi$ can be thought as a pair $(L^{p'}(\Omega,\mu), L^p(\Omega,\mu))$ by taking $p=2$ and $(\Omega, \mu)$ an appropriate measure space. At the same time, $\Hi$ also gives rise to a pair of Hilbert $\C$-modules $(\widetilde{\V},\V)$ where $\widetilde{\V}$ is the conjugate vector space of the module $ \V$, indeed just take $\V$ to be simply $\Hi$. Then, 
our definition of an $L^p$-module over an $L^p$-operator algebra $A$ will yield a pair $(\Y, \X)$, together with an $A$-valued pairing $(-\mid-)_A \colon \Y \times \X \to A$, such that 
the following diagram of ``generalizations'' commutes:
\[\begin{tikzcd}        
    \Hi \arrow[hook]{dr} \arrow[hook]{rr} & & (\widetilde{\V},\V) \arrow[hook]{dr}  &  \\
    & (L^{p'}(\Omega,\mu), L^{p}(\Omega, \mu)) \arrow[hook]{rr} & & (\Y, \X).
\end{tikzcd}
\]
See Definition \ref{DefLpMod} for the complete details. 

At the Hilbert C*-module level, it is well known that the module norm is always recovered, as an operator norm, by the C*-valued inner product (see Lemma \ref{C*innerRec}). This also happens for pairs $(L^{p'}(\mu), L^p(\mu))$ thanks to Hölder duality (see Theorem \ref{HoldDual}). However, this is generally no longer true for $L^p$-modules. Those pairs $(\Y, \X)$ for which the module norms, that is the $\Y$ and $\X$ norms, are recovered by the pairing are called \textit{C*-like $L^p$-modules} (see Conditions \eqref{C*likeness1} and \eqref{C*likeness2} in Definition \ref{DefLpMod}). The main goal of this work is to analyze, in some instances, the condition of being C*-like. 

Our main result is Theorem \ref{MainTT}, in which we prove that if $A$ is any block diagonal subalgebra of $d \times d$ matrices equipped with the $p$-operator norm for $p \in [1,\infty)$, then, for any $n \in \Z_{\geq 1} \cup \{\infty\}$, the row-column module $(M^p_{1,n}(A), M_{n,1}^p(A))$ (see Examples \ref{colrow} and \ref{INFcolrow}) is a C*-like module over $A$. Part of the difficulties of establishing results in this direction is that $p$-operator matrix norms, unless $p\in\{1,2,\infty\}$, are NP-hard to compute (see \cite{HendOlshec10}). As a consequence, in Corollary \ref{middleSteps}, we establish interesting and useful results about matrix $p$-operator norms of horizontally and vertically stacked matrices. We also present useful properties of $p$-operator norms for block diagonal matrices in Lemma \ref{KeyBlockNorms}.

The paper is organized as follows. Section \ref{PreLim} establishes the notation and contains a short overview of known results from Hilbert C*-modules and $L^p$-operator algebras that will be used throughout the paper. Section \ref{LpandC*like} deals with the definitions of $L^p$-modules, the C*-likeness conditions, and presents some examples. In Section \ref{Afdim} we focus on $L^p$-modules over matrix algebras and present our main results. Finally, in Section \ref{CounterExs} we present two counterexamples of matrix algebras whose row-column module is not C*-like. 

\section*{Acknowledgments} 
Part of this work was carried during an REU(G) project funded by the University of Colorado, Boulder in the summer of 2024. The authors are grateful for the support provided. In particular, we would like to thank Nat Thiem for the excellent organizational work and also for suggesting us to look at a simultaneously diagonalizable case, as we ended up doing in Example \ref{SD}. 

\section{Preliminaries and notation}\label{PreLim}

\subsection{Banach spaces and operator algebras}

We remind the reader that a \textit{Banach space} is a 
complex vector space $E$ equipped with a norm $\|-\|_E$ that makes $E$ 
a complete metric space. 
If $E$ is a Banach space, we will denote its closed unit ball by 
$\B(E)$, that is
\[
\B(E) = \{ \xi \in E \colon \| \xi\|_E \leq 1 \}.
\]

A Banach space $A$ that is also a complex algebra is called a \textit{Banach algebra} if the norm is submultiplicative, that is $\| a b \|_A \leq \|a\|_A\|b\|_A$ for all $a,b \in A$. 
If $E_0$ and $E_1$ are Banach spaces, we write $\Li(E_0,E_1)$
for the space of bounded linear maps from $E_0$ to $E_1$, 
which comes equipped with the operator norm, that is for $a \in \Li(E_0,E_1)$, 
\[
\|a\| = \sup \{ \| a \xi \|_{E_1} \colon { \xi \in \B(E_0)}\}.
\]
In particular, we write $\Li(E)=\Li(E,E)$, 
which is a Banach algebra. 
The topological dual of a Banach space $E$ will be denoted by $E'$, that is 
$E'=\Li(E, \C)$. 

A \textit{Hilbert space} $\Hi$ is a Banach space whose norm is induced by a 
complex valued inner product on $\Hi$ via
\[
\|\xi \|_\Hi = \langle \xi, \xi \rangle ^{1/2}.
\]
If $\Hi_0$ and $\Hi_1$ are Hilbert spaces, any 
$a \in \Li(\Hi_0, \Hi_1)$ is characterized by the existence of its adjoint
$a^* \in \Li(\Hi_1, \Hi_0)$ 
satisfying 
\[
\langle a\xi, \eta \rangle = \langle \xi, a^*\eta \rangle
\]
for any $\xi \in \Hi_0$, $\eta \in \Hi_1$. 
\begin{definition}
A \textit{concrete C*-algebra} $A$ is a closed and selfadjoint subalgebra 
of $\Li(\Hi)$ for a Hilbert space $\Hi$. An \textit{abstract C*-algebra}
is a Banach algebra $A$ with an involution $a\mapsto a^*$ such that the 
C*-equation holds: that is $\| a^*a\|=\|a\|^2$ for all $a \in A$. 
\end{definition}

\begin{remark}
It is well known, and not hard to check, that any concrete C*-algebra is also an abstract one.
The celebrated GNS construction, see for instance Theorem 3.4.1 in \cite{Murphy90}, shows that any abstract C*-algebra 
is $*$-isomorphic to a concrete C*-algebra. That is, 
if $A$ is any abstract C*-algebra, then there is a Hilbert space $\Hi$ 
and an injective $*$-preserving algebra homomorphism 
$\varphi \colon A \to \Li(\Hi)$. Furthermore, an important result in C*-algebras (see Theorem 3.1.5 in \cite{Murphy90}) says that since 
$\varphi$ is injective, then $\varphi$ is also isometric. 
Thus, we can and will identify 
$A$ with its isometric copy $\varphi(A)$ in 
$\Li(\Hi)$. 
\end{remark}
If $(\Omega, \mathfrak{M}, \mu)$ is a measure space and $E$ is a Banach space, two $E$-valued measurable functions are equivalent when their difference vanishes almost everywhere with respect to $\mu$. Then we define $L^0(\Omega \to E)$ to be the vector space of equivalence classes of $E$-valued measurable functions. For $p \in [1, \infty)$, we denote  by 
$L^p(\Omega, \mathfrak{M}, \mu)$ 
the Banach space of $p$-integrable complex valued functions equipped 
with the usual $p$-norm. That is, 
\[
 L^p(\Omega, \mathfrak{M}, \mu) = \left\{
f \in L^0(\Omega \to \C) \colon \int_\Omega |f|^p d\mu < \infty
 \right\}
\]
and 
\[
\|f \|_p =  \| f \|_{L^p(\Omega, \mathfrak{M}, \mu)} = 
\left( \int_\Omega |f|^p d\mu \right)^{1/p}.
\]
If $p=\infty$, we have 
\[
 L^p(\Omega, \mathfrak{M}, \mu) = \left\{
f \in L^0(\Omega \to \C) \colon \| f \|_\infty < \infty
 \right\}
\]
where
\[
\| f \|_\infty = \| f \|_{L^\infty(\Omega, \mathfrak{M}, \mu)}=
\inf \{ \alpha > 0 \colon \mu(\{w \in \Omega \colon |f(w)|>\alpha\})=0\}.
\]
For $p \in [1, \infty]$, we will often
write $L^p(\mu)$ instead of $L^p(\Omega, \mathfrak{M}, \mu)$. 
When $\nu$ is the counting measure on a set $\Omega$, 
we simply write $\ell^p(\Omega)$ to refer to $L^p(\Omega, 2^\Omega, \nu)$. 
In particular, when $d \in \Z_{\geq 1}$
we write $\ell_d^p$ instead of $\ell^p(\{1, \ldots, d\})$. 
\begin{definition}
For $p \in [1, \infty]$, an \textit{$L^p$-operator algebra} is 
a Banach algebra $A$ such that there is a measure space 
$(\Omega, \mathfrak{M}, \mu)$ and an isometric 
algebra homomorphism $\varphi \colon A \to \Li(L^p(\mu))$. 
\end{definition}
 Note that any C*-algebra is an $L^2$-operator 
algebra, but the converse is not true as $L^2$-operator 
algebras can fail to be selfadjoint. There is, currently, not a known abstract characterization of $L^p$-operator algebras; therefore, we always work on the concrete setting by specifying a representation on an $L^p$-space. 

\begin{notation}\label{MatrixCOnv}
We will be mostly working with spaces of matrices equipped 
with $p$-operator norms. Specifically, for $d, n \in \Z_{\geq 1}$ 
and $p \in [1, \infty]$, we fix the following notation
\[
M_{n,d}^p(\C) = \Li(\ell_d^p, \ell_n^p), \ 
M_d^p(\C) =M_{d,d}^p(\C) =  \Li(\ell_d^p). 
\]
\end{notation}
\subsection{Hölder duality} 

For reference, we briefly recall basic well known statements about Hölder duality 
that will be used throughout this paper. For each $p \in [1, \infty]$ we 
let $p' \in [1,\infty]$ be its Hölder conjugate, that is 
$p'=\frac{p}{p-1}$ so that $\frac{1}{p}+\frac{1}{p'}=1$. 
For a measure space $(\Omega, \mathfrak{M}, \mu)$
we have the dual pairing 
$\langle -, - \rangle_p \colon L^{p'}(\mu) \times L^{p}(\mu) \to \C$
given by
\begin{equation}\label{LqLpPairing}
  \langle \eta, \xi \rangle_p =  \int_\Omega \xi\eta  d\mu.  
\end{equation}
We remark that if $\Hi=\ell^2(J)$ is any Hilbert space, 
then the inner product on $\Hi$ is recovered by 
$\langle \xi, \eta \rangle = \langle  \eta, \cj{\xi} \rangle_2$.
The following result is folklore:

\begin{theorem}\label{HoldDual}
Let $p \in [1,\infty]$ and let $(\Omega, \mathfrak{M}, \mu)$ be a measure space. 
Define  $\Phi \colon L^{p'}(\mu) \to L^{p}(\mu)'$ by 
$\Phi(\eta)(\xi)=\langle \eta, \xi\rangle_p$,
for $\eta \in  L^{p'}(\mu)$ and $\xi \in  L^{p}(\mu)$. 
\begin{enumerate}
    \item If $p \in (1,\infty)$, then $\Phi$ is an isometric isomorphism 
    from $L^{p'}(\mu)$ to $L^{p}(\mu)'$
   \item  If $p=1$ and $\mu$ is sigma-finite, then $\Phi$ 
   is an isometric isomorphism from $L^\infty(\mu)$ to $L^1(\mu)'$. 
   \item If $p=\infty$ and $\mu$ is the counting measure on $\{1, \ldots, d\}$ 
   for $d \in \Z_{\geq 1}$, then $\Phi$ 
   is an isometric isomorphism from $\ell^1_d$ to $(\ell^\infty_d)'$.
\end{enumerate}
\end{theorem}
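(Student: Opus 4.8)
The plan is to break each case into three tasks: boundedness (yielding $\|\Phi(\eta)\| \le \|\eta\|_{p'}$), the reverse norm inequality (yielding isometry, hence injectivity), and surjectivity. Boundedness is immediate from Hölder's inequality in all three cases: for $\eta \in L^{p'}(\mu)$ and $\xi \in L^p(\mu)$ one has $|\Phi(\eta)(\xi)| = \left| \int_\Omega \xi\eta\, d\mu \right| \le \|\xi\|_p\|\eta\|_{p'}$, so $\Phi(\eta) \in L^p(\mu)'$ with $\|\Phi(\eta)\| \le \|\eta\|_{p'}$, and $\Phi$ is a well-defined contraction.

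For the reverse inequality in case (1) I would exhibit a near-optimizer for the supremum defining $\|\Phi(\eta)\|$. Writing $\operatorname{sgn}(\eta) = \eta/|\eta|$ and taking $\xi = \cj{\operatorname{sgn}(\eta)}\,|\eta|^{p'-1}$, one checks $\xi\eta = |\eta|^{p'}$ and $|\xi|^p = |\eta|^{p'}$; hence $\|\xi\|_p = \|\eta\|_{p'}^{p'/p}$ while $\Phi(\eta)(\xi) = \int_\Omega |\eta|^{p'}\, d\mu = \|\eta\|_{p'}^{p'}$, and dividing gives $\|\Phi(\eta)\| \ge \|\eta\|_{p'}$. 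For case (2), where $p' = \infty$, I would instead fix $\varepsilon > 0$, use sigma-finiteness to produce a set $E$ of positive finite measure on which $|\eta| > \|\eta\|_\infty - \varepsilon$, and test against $\xi = \cj{\operatorname{sgn}(\eta)}\,\chi_E/\mu(E)$, which has $\|\xi\|_1 = 1$ and $\Phi(\eta)(\xi) > \|\eta\|_\infty - \varepsilon$. Case (3) is finite-dimensional: testing against $\xi = (\cj{\operatorname{sgn}(\eta_i)})_i$, which satisfies $\|\xi\|_\infty \le 1$, gives $\Phi(\eta)(\xi) = \sum_i |\eta_i| = \|\eta\|_1$. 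In every case $\Phi$ is therefore isometric, and in particular injective.

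Surjectivity is the substantive part, and I expect the Radon–Nikodym theorem to be the main tool. Given $\psi \in L^p(\mu)'$, I would first treat the case of finite $\mu$: define the complex set function $\nu(E) = \psi(\chi_E)$, verify that it is countably additive and absolutely continuous with respect to $\mu$, and let $\eta = d\nu/d\mu$ be its Radon–Nikodym derivative. By construction $\Phi(\eta)$ and $\psi$ agree on simple functions, hence on all of $L^p(\mu)$ by density and continuity; that $\eta \in L^{p'}(\mu)$ is then extracted by applying the already-established norm computation to suitable truncations of $\eta$. For case (1) with arbitrary $\mu$ I would reduce to the sigma-finite case (every element of $L^p$ is supported on a sigma-finite set) and then exhaust along an increasing sequence of finite-measure sets, using the isometry to glue the resulting local derivatives into a single $\eta$; case (2) uses its sigma-finiteness hypothesis in the same exhaustion. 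Case (3) requires no Radon–Nikodym at all: $\ell^1_d$ and $(\ell^\infty_d)'$ both have dimension $d$, so the injective linear map $\Phi$ between them is automatically onto.

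The main obstacle will be the surjectivity argument: specifically, checking that the Radon–Nikodym derivative genuinely lies in $L^{p'}(\mu)$, and handling the passage from finite to sigma-finite (and, in case (1), to arbitrary) measure spaces. This is also precisely where the hypotheses on $\mu$ in cases (2) and (3) are indispensable, reflecting that $L^\infty(\mu)' \ne L^1(\mu)$ for general infinite measure spaces.
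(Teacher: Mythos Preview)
Your proposal is correct and is essentially the standard textbook argument; however, it differs from the paper's proof in that the paper does not reprove anything. For parts (1) and (2) the paper simply cites Theorem~6.15 in Folland's \emph{Real Analysis}, and for part (3) it observes that in finite dimension a direct check suffices. What you have written is, in effect, a sketch of the proof behind that citation (H\"older for the contraction, an explicit extremizer for the reverse inequality, and Radon--Nikodym plus exhaustion for surjectivity), together with the finite-dimensional rank argument for (3). Your approach is self-contained and shows you understand why the hypotheses on $\mu$ in (2) and (3) matter; the paper's approach is appropriate because the result is folklore and is only being recorded for reference.
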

\begin{proof}
Parts (1) and (2) follow from Theorem 6.15 in \cite{FollGB99}.
 For Part (3), since we are working in finite dimension, 
 a direct check shows that $\Phi \colon \ell^1_d \to (\ell_d^\infty)'$ is indeed an isometric isomorphism. 
\end{proof}

An immediate consequence of the previous result, which we will often use, is that 
the norm of a matrix as an operator in $\Li(\ell_d^p, \ell_n^q)$
can be obtained by solving a double maximization problem:
\begin{corollary}\label{FiniteDopNorm}
Let $d, n \in \Z_{\geq 1}$, let $p, q \in [1, \infty]$, and let 
$a \in \Li(\ell_d^p, \ell_n^q)$. 
Then
\[
\|a \|_{ \Li(\ell_d^p, \ell_n^q)} = \max \left\{ |\langle \eta, a\xi \rangle_q|
\colon 
\xi \in \B(\ell_d^p), \eta \in \B\big(\ell_n^{q'}\big) \right\}.
\]
\end{corollary}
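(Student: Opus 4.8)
The plan is to reduce the operator norm to a double supremum and then recover one of the two norms using Hölder duality. First I would write out the definition of the operator norm,
\[
\|a\|_{\Li(\ell_d^p, \ell_n^q)} = \sup\{\|a\xi\|_{\ell_n^q} \colon \xi \in \B(\ell_d^p)\},
\]
so that the problem reduces to understanding $\|a\xi\|_{\ell_n^q}$ for a fixed vector $a\xi \in \ell_n^q$.

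The key step is to recover the norm on $\ell_n^q$ from its dual via the pairing $\langle -, - \rangle_q$. Concretely, I claim that for any $\zeta \in \ell_n^q$ one has
\[
\|\zeta\|_{\ell_n^q} = \sup\{|\langle \eta, \zeta \rangle_q| \colon \eta \in \B(\ell_n^{q'})\}.
\]
This is exactly the content of Theorem \ref{HoldDual} once we observe that the map $\Phi \colon \ell_n^{q'} \to (\ell_n^q)'$, $\Phi(\eta)(\zeta) = \langle \eta, \zeta \rangle_q$, is an isometric isomorphism: since $\Phi$ is a surjective isometry, $\B((\ell_n^q)') = \Phi(\B(\ell_n^{q'}))$, and the displayed equality then follows from the standard fact that $\|\zeta\|_E = \sup\{|\varphi(\zeta)| \colon \varphi \in \B(E')\}$. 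The only care needed here is to invoke the correct case of Theorem \ref{HoldDual} according to the value of $q$: part (1) when $q \in (1, \infty)$, part (2) when $q = 1$ (the counting measure on $\{1, \ldots, n\}$ is $\sigma$-finite, so $(\ell_n^1)' \cong \ell_n^\infty$), and part (3) when $q = \infty$ (so that $(\ell_n^\infty)' \cong \ell_n^1$). In every case $\Phi \colon \ell_n^{q'} \to (\ell_n^q)'$ is the required isometric isomorphism.

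Plugging $\zeta = a\xi$ into the norm-recovery formula and combining with the operator-norm definition yields
\[
\|a\|_{\Li(\ell_d^p, \ell_n^q)} = \sup_{\xi \in \B(\ell_d^p)} \; \sup_{\eta \in \B(\ell_n^{q'})} |\langle \eta, a\xi \rangle_q|,
\]
which is the desired supremum over the pair $(\xi, \eta)$. Finally, I would upgrade the supremum to a maximum: because $d$ and $n$ are finite, the balls $\B(\ell_d^p)$ and $\B(\ell_n^{q'})$ are compact, and the map $(\xi, \eta) \mapsto |\langle \eta, a\xi \rangle_q|$ is continuous, being the modulus of a bounded bilinear form composed with the bounded operator $a$. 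Hence the supremum over the compact set $\B(\ell_d^p) \times \B(\ell_n^{q'})$ is attained. I do not anticipate any genuine obstacle: the statement is a repackaging of Hölder duality together with the compactness of balls in finite dimensions, and the only point requiring attention is the case split above, which ensures Theorem \ref{HoldDual} applies at the endpoints $q = 1$ and $q = \infty$.
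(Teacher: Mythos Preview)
Your proposal is correct and follows precisely the approach the paper intends: the paper does not write out a proof for this corollary, stating only that it is ``an immediate consequence of the previous result'' (Theorem~\ref{HoldDual}), and your argument---operator norm as a supremum over $\xi$, then H\"older duality to rewrite $\|a\xi\|_q$ as a supremum over $\eta$, then compactness of finite-dimensional balls to pass from supremum to maximum---is exactly the unpacking of that remark, including the correct case split on $q$ to cover the endpoints.
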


From the last corollary, we also obtain a particular case of the well known result that the norm of an operator agrees with the one of its dual (see Proposition VI.1.4 in \cite{ConFA90} for instance). In our scenario, this means that the $p \to q$ operator norm of any matrix agrees with the $q' \to p'$ operator norm of its transpose:

\begin{corollary}\label{a_p=a^T_q}
    Let $d, n \in \Z_{\geq 1}$, let $p, q \in [1, \infty]$, and let 
$a \in \Li(\ell_d^p, \ell_n^q)$. Then $a^T \in \Li(\ell_n^{q'}, \ell_d^{p'})$ and 
\[
\| a \|_{ \Li(\ell_d^p, \ell_n^q)} = \| a^T \|_{\Li(\ell_n^{q'}, \ell_d^{p'})}
\]
\end{corollary}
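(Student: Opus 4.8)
The plan is to reduce everything to Corollary \ref{FiniteDopNorm}, which already expresses each operator norm as a double maximization of the dual pairing. Since all the spaces are finite-dimensional, the matrix transpose $a^T$ is automatically a bounded map $\ell_n^{q'} \to \ell_d^{p'}$, so the first assertion $a^T \in \Li(\ell_n^{q'}, \ell_d^{p'})$ is immediate; the content lies entirely in the equality of the two norms.

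First I would record the elementary transpose identity relating the two pairings. Writing $a=(a_{ji})$ for the matrix of $a$ and using the explicit form of the dual pairing \eqref{LqLpPairing} for the counting measure, a direct computation gives, for all $\xi \in \ell_d^p$ and $\eta \in \ell_n^{q'}$,
\[
\langle \eta, a\xi \rangle_q = \sum_{i,j} a_{ji}\,\xi_i\,\eta_j = \langle \xi, a^T\eta \rangle_{p'}.
\]
This is the $L^p$-analogue of the Hilbert-space adjoint relation $\langle a\xi, \eta\rangle = \langle \xi, a^*\eta\rangle$, and it is the only place where the structure of the transpose actually enters the argument.

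Next I would apply Corollary \ref{FiniteDopNorm} twice. Applied to $a \in \Li(\ell_d^p, \ell_n^q)$ it yields
\[
\|a\|_{\Li(\ell_d^p, \ell_n^q)} = \max\left\{ |\langle \eta, a\xi \rangle_q| \colon \xi \in \B(\ell_d^p),\ \eta \in \B\big(\ell_n^{q'}\big) \right\}.
\]
Applied to $a^T \in \Li(\ell_n^{q'}, \ell_d^{p'})$, and using that $(p')'=p$, it yields
\[
\|a^T\|_{\Li(\ell_n^{q'}, \ell_d^{p'})} = \max\left\{ |\langle \xi, a^T\eta \rangle_{p'}| \colon \eta \in \B\big(\ell_n^{q'}\big),\ \xi \in \B(\ell_d^p) \right\}.
\]
By the transpose identity above the two objective functions coincide, and both maximizations range over exactly the same set $\B(\ell_d^p) \times \B\big(\ell_n^{q'}\big)$. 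Hence the two maxima agree, which is the desired equality.

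The argument is essentially bookkeeping, so I do not expect a genuine obstacle. The one point demanding care is tracking the Hölder conjugates through the second application of Corollary \ref{FiniteDopNorm}---confirming that $(p')'=p$ and $(q')'=q$ so that the unit balls appearing in the two instances are literally the same---and ensuring that the roles of the ``test vector'' and the ``test functional'' are correctly interchanged between the two applications.
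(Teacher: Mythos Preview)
Your proof is correct and follows exactly the same approach as the paper: observe the transpose identity $\langle \eta, a\xi \rangle_q = \langle \xi, a^T\eta \rangle_{p'}$ and combine it with Corollary~\ref{FiniteDopNorm}. The paper's proof is just a one-line version of what you wrote.
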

\begin{proof}
It follows at once from Corollary \ref{FiniteDopNorm} after observing that 
    \[
    \langle \eta, a\xi \rangle_{q} = \langle \xi , a^T \eta \rangle_{p'}
    \]
    for any $\eta \in \ell_n^{q'}$ and $\xi \in \ell_d^p$.
\end{proof}

\subsection{Spatial tensor product}\label{pTenProd}

For $p \in [1, \infty)$, there is a tensor product, called the 
\textit{spatial tensor product} and denoted by $\otimes_p$. 
We refer the reader to Section 7 of \cite{defflor1993} for complete 
details on this 
tensor product. We only describe below the properties we will need. 
If $(\Omega_0, \mathfrak{M}_0, \mu_0)$ is a measure space and $E$ is a 
Banach space, 
then there is an isometric isomorphism 
\[
L^p(\mu_0) \otimes_p E \cong L^p(\mu_0, E)=
\{ g \in  L^0(\Omega_0 \to E)
\colon 
\textstyle{\int_{\Omega_0}} \|g(x)\|_E^p \  d\mu_0(x) < \infty\}
\]
such that for any $\xi \in L^p(\mu_0)$ and $\eta \in E$, 
the elementary tensor $\xi \otimes \eta $ 
is sent to the function $\omega \mapsto \xi(\omega)\eta$. 
Furthermore, if $(\Omega_1,  \mathfrak{M}_1, \mu_1)$
is another measure space and $E=L^p(\mu_1)$, then there is 
an isometric isomorphism
\begin{equation}\label{LpTensor}
L^p(\mu_0) \otimes_p L^p(\mu_1) \cong 
L^{p}(\Omega_0 \times \Omega_1, \mu_0 \times \mu_1),
\end{equation}
sending $\xi \otimes \eta$ to the function 
$(\omega_0,\omega_1) \mapsto \xi(\omega_0)\eta(\omega_1)$ 
for every $\xi \in L^p(\mu_0)$
and $\eta \in L^p(\mu_1)$. 
We describe its main properties below. 
The following is Theorem 2.16 in \cite{ncp2012AC}, 
except that we have removed the $\sigma$-finiteness assumption 
as in the proof in Theorem 1.1 in \cite{figiel1984}. 
\begin{theorem}\label{SpatialTP}
Let $p \in [1, \infty)$ and for $j \in \{0,1\}$ let 
$(\Omega_j, \mathfrak{M}_j, \mu_j)$, 
$(\Lambda_j, \mathfrak{N}_j, \nu_j)$ be measure spaces.
\begin{enumerate}
\item Under the identification in \eqref{LpTensor}, 
$\op{span}\{ \xi \otimes \eta  \colon \xi \in L^p(\mu_0), \eta \in L^p(\mu_1)\}$ 
is a dense subset of 
$L^{p}(\Omega_0 \times \Omega_1, \mu_0 \times \mu_1)$.  \label{denseTensor_p}
\item $\| \xi \otimes \eta \|_p = \| \xi\|_p\|\eta\|_p$  for every 
$\xi \in L^p(\mu_0)$ and $\eta \in L^p(\mu_1)$. \label{LpTensorNorm}
\item Suppose that 
$a \in \Li(L^p(\mu_0), L^p(\nu_0))$
and $b \in \Li(L^p(\mu_1), L^p(\nu_1))$. 
Then there is a unique map 
$a\otimes b \in \Li(L^p(\mu_0 \times \mu_1),
 L^p( \nu_0 \times \nu_1))$ 
such that 
\[
(a \otimes b)(\xi \otimes \eta)=a\xi \otimes b\eta
\]
for every $\xi \in L^p(\mu_0)$ and $\eta \in L^p(\mu_1)$. 
Further, $\| a\otimes b\|=\|a\|\|b\|$. \label{LpTensorOp}
\item The tensor product of operators defined in 
\eqref{LpTensorOp} is associative, bilinear, 
and satisfies (when the domains are appropriate) 
$(a_1 \otimes b_1)(a_2 \otimes b_2) = a_1 a_2 \otimes b_1b_2$. \label{LpTensorOpProp}
\end{enumerate}
\end{theorem}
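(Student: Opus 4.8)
The plan is to establish the four parts in order, using the vector-valued identification $L^p(\mu_0 \times \mu_1) \cong L^p(\mu_0, L^p(\mu_1))$ as the main tool and reducing everything after Part \eqref{denseTensor_p} to checking identities on elementary tensors and then invoking density. For Part \eqref{denseTensor_p}, I would recall that finite linear combinations of indicators $\mathbf{1}_{E_0 \times E_1}$ of measurable rectangles with $\mu_0(E_0), \mu_1(E_1) < \infty$ are dense in $L^p(\Omega_0 \times \Omega_1, \mu_0 \times \mu_1)$, by the usual approximation of $L^p$-functions by simple functions adapted to the product $\sigma$-algebra; since $\mathbf{1}_{E_0 \times E_1}$ is precisely the elementary tensor $\mathbf{1}_{E_0} \otimes \mathbf{1}_{E_1}$ under \eqref{LpTensor}, the span of elementary tensors is dense. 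For Part \eqref{LpTensorNorm}, Tonelli's theorem applied to the non-negative integrand $|\xi(\omega_0)\eta(\omega_1)|^p$ gives $\|\xi \otimes \eta\|_p^p = \|\xi\|_p^p \|\eta\|_p^p$, and taking $p$-th roots finishes it.

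The heart of the argument is Part \eqref{LpTensorOp}. Uniqueness of $a \otimes b$ is immediate from Part \eqref{denseTensor_p}, as a bounded operator is determined by its values on a dense subspace. For existence and the norm identity I would first build the two one-sided operators. Viewing $F \in L^p(\mu_0 \times \mu_1)$ as $\omega_0 \mapsto F(\omega_0, \cdot) \in L^p(\mu_1)$ and setting $(I \otimes b)F := \bigl(\omega_0 \mapsto b\,F(\omega_0, \cdot)\bigr)$, I would estimate
\[
\|(I \otimes b)F\|_p^p = \int_{\Omega_0} \|b\,F(\omega_0, \cdot)\|_{L^p(\mu_1)}^p \, d\mu_0 \leq \|b\|^p \|F\|_p^p,
\]
so $\|I \otimes b\| \leq \|b\|$, and symmetrically $\|a \otimes I\| \leq \|a\|$. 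Since both one-sided operators carry out the prescribed action on elementary tensors, density gives $a \otimes b = (a \otimes I)(I \otimes b)$, whence $\|a \otimes b\| \leq \|a\|\|b\|$. The reverse bound uses Part \eqref{LpTensorNorm}: for unit vectors $\xi, \eta$ we have $\|(a \otimes b)(\xi \otimes \eta)\|_p = \|a\xi\|_p\|b\eta\|_p$, and taking suprema yields $\|a \otimes b\| \geq \|a\|\|b\|$.

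Part \eqref{LpTensorOpProp} then reduces to checking the stated identities on elementary tensors and extending by density and the uniqueness from Part \eqref{LpTensorOp}. Bilinearity and the multiplicativity relation $(a_1 \otimes b_1)(a_2 \otimes b_2)(\xi \otimes \eta) = a_1 a_2 \xi \otimes b_1 b_2 \eta$ are direct computations on simple tensors, while associativity follows from the associativity of the iterated identification $L^p(\mu_0 \times \mu_1 \times \mu_2) \cong L^p(\mu_0) \otimes_p L^p(\mu_1) \otimes_p L^p(\mu_2)$.

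I expect the main obstacle to be the well-definedness of the pointwise operator $I \otimes b$ without a $\sigma$-finiteness hypothesis, namely verifying that $\omega_0 \mapsto b\,F(\omega_0, \cdot)$ is genuinely measurable and lies in $L^p(\mu_0, L^p(\nu_1))$, and that the vector-valued identification remains valid at this level of generality. This is exactly the technical point for which \cite{figiel1984} is invoked in place of the $\sigma$-finite argument, and it is where I would spend the most care.
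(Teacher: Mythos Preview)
The paper does not give its own proof of this theorem: it is stated as a quotation of Theorem 2.16 in \cite{ncp2012AC}, with the $\sigma$-finiteness hypothesis removed by appealing to \cite{figiel1984}. So there is nothing to compare your argument against at the level of proof details.

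That said, your sketch is essentially the standard route one finds in those references: density of rectangle indicators for \eqref{denseTensor_p}, Tonelli for \eqref{LpTensorNorm}, factoring $a\otimes b=(a\otimes I)(I\otimes b)$ through the vector-valued identification for the upper bound in \eqref{LpTensorOp}, and elementary tensors plus \eqref{LpTensorNorm} for the lower bound. You have also correctly isolated the only genuinely delicate point, namely that without $\sigma$-finiteness the identification $L^p(\mu_0\times\mu_1)\cong L^p(\mu_0,L^p(\mu_1))$ and the measurability of $\omega_0\mapsto bF(\omega_0,\cdot)$ need justification; this is precisely why the paper invokes \cite{figiel1984} rather than reproducing the argument.
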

%\begin{definition}
%Let $p \in [1, \infty)$ and let 
% $A \subseteq \Li(L^p(\mu))$ and $B \subseteq \Li(L^p(\nu))$ be 
% (concrete) $L^p$-operator algebras. 
% We define $
%  A \otimes_p B$
%   to be the closed linear span, in $\Li\big(L^p(\mu \times \nu) \big)$,
%   of all $a \otimes b$ for $a \in A$ and $b \in B$. 
%\end{definition}

\subsection{Hilbert C*-modules}\label{Hc*RECAP}

Our main objects of study in this paper will be certain modules over 
$L^p$-operator algebras that generalize the notion of (right)
Hilbert modules over C*-algebras. In this subsection we 
recall some of the basic theory of Hilbert modules and prove 
facts that will be used for comparison with the $L^p$-case. 

\begin{definition}
    Let $A$ be a C*-algebra. A \textit{(right) Hilbert $A$-module}
is a complex vector space $\X$ that is a (right) $A$-module 
    equipped 
    with a (right) $A$-valued inner product 
    $\langle - , - \rangle_A \colon \X \times \X \to A$
    such that $\X$ is complete with the norm 
    \[
    \|x\|= \|\langle x, x \rangle_A\|^{1/2}.
    \]
\end{definition}

For any right Hilbert $A$-module $\X$ we let 
$\widetilde{\X}$ be its conjugate vector space. That is, 
as an additive group $\widetilde{\X}$ is isomorphic to $\X$ (that is we have an additive group isomorphism $x \mapsto \tilde{x} \in \widetilde{\X}$) and as a vector space $\widetilde{\X}=\{\tilde{x} \colon x \in \X\}$ has scalar 
multiplication given by
\[
\lambda \cdot \tilde{x} = \widetilde{ \cj{\lambda} x},
\]
for any $\lambda \in \C$, $x \in \X$. 
Note that $\widetilde{\X}$ has a natural 
structure of a left Hilbert $A$-module with left action 
and left $A$-valued inner product given by 
\[
a \cdot \tilde{x} = \widetilde{xa^*}, 
\ {}_A\langle  \tilde{x}, \tilde{y} \rangle = \langle x,y \rangle_{A}.
\]
Therefore, the $A$-valued inner product is also interpreted 
as a pairing between $\widetilde{\X}$ and $\X$ by letting 
$( - \mid - )_{A} \colon \widetilde{\X} \times \X \to A$ be defined 
as 
\begin{equation}\label{DualC*pair}
( \tilde{x} \mid y )_{A} =  \langle x,y \rangle_{A} = {}_A\langle  \tilde{x}, \tilde{y} \rangle.
\end{equation}
Just as with C*-algebras, there is a concrete definition for Hilbert modules. 
\begin{definition}
Let $A \subseteq \Li(\Hi_0)$ be a concrete C*-algebra acting on a Hilbert space $\Hi_0$ and let 
$\Hi_1$ also be a Hilbert space. 
A \textit{concrete right Hilbert $A$-module} is a 
closed subspace $\X \subseteq \Li(\Hi_0, \Hi_1)$ 
such that $xa \in \X$ for all $x \in \X$, $a \in A$ 
and $x^*y \in A$  for all $x,y \in \X$. 
\end{definition}
It is clear that any concrete Hilbert $A$-module 
is also a Hilbert $A$-module with right action and 
$A$-valued inner product defined by
\[
x\cdot a = xa \in \X, \ \langle x, y \rangle_A = x^*y \in A,
\]
see Proposition 2.2 in \cite{Delfin24} for the details. 
Conversely, it is also known
that any right Hilbert $A$-module $\X$ can be isometrically represented as a concrete one:
\begin{theorem}\label{ConcreteMod}
    Let $A$ be a C*-algebra,
    let $\X$ be any right Hilbert $A$-module, and let 
    $\varphi_A \colon A \to \Li(\Hi_0)$ be any faithful nondegenerate representation of $A$ on a Hilbert space $\Hi_0$. Then there is a Hilbert space $\Hi_1$ and an isometric linear map $\pi_\X \colon \X \to \Li(\Hi_0, \Hi_1)$
    such that $\pi_\X(\X)$ is a concrete right Hilbert $\varphi_A(A)$-module. Furthermore, $\pi_\X(xa)=\pi_\X(x)\varphi_A(a)$ and $\varphi_A(\langle x,y\rangle_A)=\pi_\X(x)^*\pi_\X(y)$ for all $x,y \in \X$ and $a \in A$. 
\end{theorem}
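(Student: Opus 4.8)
The plan is to construct $\Hi_1$ as the interior tensor product (Rieffel induction) of $\X$ with $\Hi_0$ along the representation $\varphi_A$, and to let $\pi_\X(x)$ act by $\xi \mapsto x \otimes \xi$. On the algebraic tensor product $\X \odot \Hi_0$ I would put the sesquilinear form determined on elementary tensors by
\[
\langle x \otimes \xi, \, y \otimes \eta \rangle = \langle \xi, \, \varphi_A(\langle x, y \rangle_A)\, \eta \rangle_{\Hi_0},
\]
and extend sesquilinearly. First I would check that this form is Hermitian, which follows from $\langle x, y \rangle_A^* = \langle y, x \rangle_A$ together with the adjoint relation $\langle a\xi, \eta\rangle = \langle \xi, a^*\eta\rangle$ in $\Hi_0$ and the fact that $\varphi_A$ is $*$-preserving.

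The main obstacle is \emph{positivity} of this form. For $z = \sum_{i=1}^n x_i \otimes \xi_i$, a direct expansion gives
\[
\langle z, z \rangle = \sum_{i,j=1}^n \langle \xi_i, \, \varphi_A(\langle x_i, x_j\rangle_A)\, \xi_j \rangle_{\Hi_0} = \langle \Xi, \, T\, \Xi \rangle_{\Hi_0^{\,n}},
\]
where $\Xi = (\xi_1, \ldots, \xi_n)$ and $T = [\varphi_A(\langle x_i, x_j \rangle_A)]_{i,j}$ is viewed as an operator on $\Hi_0^{\,n}$. Thus positivity reduces to $T \geq 0$. For this I would invoke the standard fact that the Gram matrix $[\langle x_i, x_j\rangle_A]$ is positive in $M_n(A)$ (see \cite{Lance95}), together with the observation that the entrywise extension $\varphi_A^{(n)} \colon M_n(A) \to M_n(\Li(\Hi_0)) = \Li(\Hi_0^{\,n})$ is a $*$-homomorphism, hence positive, so that $T = \varphi_A^{(n)}([\langle x_i, x_j\rangle_A]) \geq 0$. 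This is where the genuine content lies; the remaining steps are formal.

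Once positivity is in hand, Cauchy--Schwarz shows that $N = \{ z : \langle z, z\rangle = 0\}$ is a subspace, so the form descends to an inner product on $(\X \odot \Hi_0)/N$, and I let $\Hi_1$ be its completion and set $\pi_\X(x)\xi = x \otimes \xi$. Boundedness is immediate from
\[
\|\pi_\X(x)\xi\|^2 = \langle \xi, \varphi_A(\langle x, x\rangle_A)\xi\rangle_{\Hi_0} \leq \|\varphi_A(\langle x, x\rangle_A)\| \, \|\xi\|^2,
\]
and since $\varphi_A(\langle x, x\rangle_A)$ is a positive operator its norm equals the supremum of $\langle \xi, \varphi_A(\langle x,x\rangle_A)\xi\rangle$ over the unit ball; hence $\|\pi_\X(x)\|^2 = \|\varphi_A(\langle x, x\rangle_A)\| = \|\langle x, x\rangle_A\| = \|x\|^2$, where the middle equality uses that $\varphi_A$ is isometric because it is faithful. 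Thus $\pi_\X$ is a well-defined isometric linear map.

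It remains to verify the two identities and concreteness. The balancing relation $\pi_\X(xa) = \pi_\X(x)\varphi_A(a)$ is the statement $(xa)\otimes \xi = x \otimes \varphi_A(a)\xi$ in $\Hi_1$, which I would prove by checking that their difference lies in $N$: using $\langle xa, y\rangle_A = a^* \langle x, y\rangle_A$ one gets $\varphi_A(\langle xa, y\rangle_A) = \varphi_A(a)^* \varphi_A(\langle x, y\rangle_A)$, and the two inner products cancel when paired against an arbitrary $y \otimes \eta$. The identity $\varphi_A(\langle x, y\rangle_A) = \pi_\X(x)^*\pi_\X(y)$ is read off directly from the defining form. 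These give $\pi_\X(\X)\varphi_A(A) \subseteq \pi_\X(\X)$ and $\pi_\X(\X)^*\pi_\X(\X) \subseteq \varphi_A(A)$, while $\pi_\X(\X)$ is closed in $\Li(\Hi_0, \Hi_1)$ because $\pi_\X$ is isometric and $\X$ is complete; hence $\pi_\X(\X)$ is a concrete right Hilbert $\varphi_A(A)$-module, completing the argument. I note that nondegeneracy of $\varphi_A$ is not needed for the isometric embedding itself; it is the natural hypothesis guaranteeing that the induced representation is again nondegenerate.
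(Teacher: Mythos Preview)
Your argument is correct: this is the standard interior-tensor-product (Rieffel induction) construction, and each step---positivity via the Gram matrix in $M_n(A)$, quotient--completion, isometry from faithfulness of $\varphi_A$, the balancing relation, and the adjoint identity---is handled properly. The paper itself does not give a proof but simply cites \cite{Zettl83}, \cite{Murphy97}, and \cite{Delfin24} as references for this well-known result; your construction is essentially the approach taken in (at least some of) that literature, so there is nothing substantive to compare.
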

\begin{proof}
    This is well known and has been shown using different methods. See, for instance, Theorem 2.6 in \cite{Zettl83}, Theorem 3.1 in \cite{Murphy97}, and Corollary 3.6 in \cite{Delfin24}.
\end{proof}

Now assuming that $\X \subseteq \Li(\Hi_0, \Hi_1)$ is a concrete right Hilbert $A$-module, then $\X^* = \{ x^* \colon x \in \X\} \subseteq \Li(\Hi_1, \Hi_0)$ is  
a concrete left Hilbert $A$-module with action $a \cdot x^*=(xa^*)^*$ and left $A$-valued inner
product ${}_A\langle x^*, y^* \rangle = x^*y$. 
It is clear that 
$\widetilde{\X}$ and $\X^*$ are isomorphic 
as left Hilbert modules via the map $\tilde{x} \mapsto x^*$. Thus we have shown the following:
\begin{proposition}\label{ConcreteConj}
     Let $A$ be a C*-algebra,
    let $\X$ be any right Hilbert $A$-module, let 
    $\varphi_A \colon A \to \Li(\Hi_0)$ be any faithful nondegenerate representation of $A$ on a Hilbert space $\Hi_0$, and let $\pi_\X$ be the map from Theorem \ref{ConcreteMod}. Then $\widetilde{\X}$ and $\pi_\X(\X)^*$ are isomorphic
as left Hilbert modules.
\end{proposition}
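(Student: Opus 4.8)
The plan is to verify directly that the map already singled out in the discussion preceding the statement does the job after transporting everything through $\pi_\X$. Concretely, I would define $\Psi \colon \widetilde{\X} \to \pi_\X(\X)^*$ by $\Psi(\tilde{x}) = \pi_\X(x)^*$ and check that it is an isomorphism of left Hilbert modules, where the action and inner product on $\pi_\X(\X)^*$ are understood through the identification $A \cong \varphi_A(A)$ afforded by the faithful (hence, by Theorem 3.1.5 in \cite{Murphy90}, isometric) representation $\varphi_A$.

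First I would confirm that $\Psi$ is a well-defined complex-linear bijection. Well-definedness and injectivity follow because $\pi_\X$ is isometric, hence injective, and $x \mapsto x^*$ is a bijection, while surjectivity onto $\pi_\X(\X)^*$ is immediate from the definition of that set. For linearity, recall that the scalar action on the conjugate module is $\lambda \cdot \tilde{x} = \widetilde{\cj\lambda x}$, so $\Psi(\lambda \cdot \tilde{x}) = \pi_\X(\cj\lambda x)^* = (\cj\lambda\, \pi_\X(x))^* = \lambda\, \pi_\X(x)^* = \lambda\, \Psi(\tilde{x})$; the conjugate-linearity of the adjoint exactly cancels the conjugation built into $\widetilde{\X}$.

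Next I would match the two left-module structures. On $\widetilde{\X}$ the action is $a \cdot \tilde{x} = \widetilde{xa^*}$, so using $\pi_\X(xa^*) = \pi_\X(x)\varphi_A(a^*)$ from Theorem \ref{ConcreteMod} together with the fact that $\varphi_A$ is $*$-preserving, I obtain $\Psi(a \cdot \tilde{x}) = (\pi_\X(x)\varphi_A(a)^*)^* = \varphi_A(a)\pi_\X(x)^*$, which is precisely the action $\varphi_A(a) \cdot \Psi(\tilde{x})$ on the concrete left module $\pi_\X(\X)^*$. For the inner products, the left $A$-valued inner product on $\widetilde{\X}$ is ${}_A\langle \tilde{x}, \tilde{y}\rangle = \langle x, y\rangle_A$, whereas the concrete left inner product on $\pi_\X(\X)^*$ evaluates to $\pi_\X(x)^*\pi_\X(y)$; by the relation $\varphi_A(\langle x,y\rangle_A) = \pi_\X(x)^*\pi_\X(y)$ of Theorem \ref{ConcreteMod}, these agree once $A$ is identified with $\varphi_A(A)$. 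Thus $\Psi$ intertwines both actions and inner products, and since $\varphi_A$ is isometric the induced module norms coincide, so $\Psi$ is in fact an isometric isomorphism.

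There is no genuine obstacle here: the statement is essentially the concrete realization $\pi_\X$ of Theorem \ref{ConcreteMod} combined with the elementary observation, recorded just before the statement, that $\X^*$ and $\widetilde{\X}$ coincide as left Hilbert modules for any concrete right Hilbert module. The only thing to be careful about is the bookkeeping—tracking where the adjoints, the conjugate scalar multiplication on $\widetilde{\X}$, and the passage from $A$ to $\varphi_A(A)$ each enter—so that every compatibility is asserted relative to the single identification $A \cong \varphi_A(A)$.
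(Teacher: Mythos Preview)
Your proposal is correct and follows essentially the same approach as the paper: the paper's argument is the short paragraph immediately preceding the proposition, which observes that for any concrete right module the map $\tilde{x}\mapsto x^*$ gives the desired left-module isomorphism, and then invokes Theorem \ref{ConcreteMod} to reduce to the concrete case. You simply unfold this into the explicit map $\Psi(\tilde{x})=\pi_\X(x)^*$ and verify the compatibilities in detail, which is exactly what the paper leaves as ``clear.''
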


We now state, for reference,
a well known result
for computing norms in Hilbert $A$-modules:

\begin{lemma}\label{C*innerRec}
    Let $A$ be a C*-algebra and let $\X$ be a right Hilbert
    module. Then, for any $x \in \X$,
    \[
    \| x \| = \sup \{ \|\langle x,y \rangle_A\| \colon 
    y \in \B(\X) \}.
    \]
\end{lemma}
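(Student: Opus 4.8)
The plan is to prove the two inequalities $\leq$ and $\geq$ separately, with essentially all of the content concentrated in the Cauchy--Schwarz inequality for Hilbert C*-modules. I assume throughout that $x \neq 0$; the case $x = 0$ is immediate, since then both sides vanish by the defining formula $\|x\| = \|\langle x, x\rangle_A\|^{1/2}$.

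First I would bound the supremum from above. The tool is the module Cauchy--Schwarz inequality, which states that $\langle x, y\rangle_A^*\langle x, y\rangle_A \leq \|x\|^2\,\langle y, y\rangle_A$ as an inequality between positive elements of $A$ (recall $\|x\|^2 = \|\langle x,x\rangle_A\|$). Taking the C*-norm of both sides, using the C*-equation $\|\langle x, y\rangle_A^*\langle x, y\rangle_A\| = \|\langle x, y\rangle_A\|^2$ on the left and monotonicity of the norm on the positive cone on the right, I obtain $\|\langle x, y\rangle_A\|^2 \leq \|x\|^2\,\|\langle y, y\rangle_A\| = \|x\|^2\,\|y\|^2$. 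Hence $\|\langle x, y\rangle_A\| \leq \|x\|\,\|y\| \leq \|x\|$ for every $y \in \B(\X)$, which gives $\sup\{\|\langle x, y\rangle_A\| \colon y \in \B(\X)\} \leq \|x\|$.

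For the reverse inequality I would simply exhibit a maximizer. Take $y = \|x\|^{-1} x$; then $\|y\| = 1$, so $y \in \B(\X)$, and by homogeneity of the inner product in its second slot $\langle x, y\rangle_A = \|x\|^{-1}\langle x, x\rangle_A$. Therefore $\|\langle x, y\rangle_A\| = \|x\|^{-1}\|\langle x, x\rangle_A\| = \|x\|^{-1}\|x\|^2 = \|x\|$, so the supremum is at least $\|x\|$ (and is in fact attained). Combining the two inequalities finishes the proof.

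The only step with real substance is the Cauchy--Schwarz inequality, and this is where I expect the ``obstacle'' to lie. I would either invoke it as a standard fact for Hilbert C*-modules (Proposition 1.1 in \cite{Lance95}), or, for a self-contained treatment, derive it from the positivity $\langle z, z\rangle_A \geq 0$ applied to a suitable combination $z$ of $x$ and $y$, together with a state-space or completion-of-the-square argument carried out in the ordered C*-algebra $A$. Everything after that is routine manipulation of the defining norm $\|x\| = \|\langle x,x\rangle_A\|^{1/2}$.
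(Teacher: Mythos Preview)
Your proof is correct. The paper does not actually prove this lemma but simply cites Lemma 2.18 and (2.14) in Raeburn--Williams; your self-contained argument (Cauchy--Schwarz for the upper bound, the normalized vector $y = \|x\|^{-1}x$ for the lower bound) is exactly the standard proof one finds in such references, so there is no meaningful difference in approach.
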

\begin{proof}
See Lemma 2.18 and (2.14) in \cite{RaeWil98}. 
\end{proof}

Observe that the previous result says that the
pairing \eqref{DualC*pair} recovers
the norms in both $\X$ and $\widetilde{\X}$. Indeed, 
since the map $x \mapsto \tilde{x}$ is an isometry, 
the previous lemma gives at once, for any $x \in \X$, 
that 
\begin{equation}\label{DualC*pairR1}
\| x \|= \sup \left\{  \| ( \tilde{y} \mid x )_{A} \|
\colon  \tilde{y} \in \B\big(\widetilde{\X}\big)
\right\}
\end{equation}
and 
\begin{equation}\label{DualC*pairR2}
\|\tilde{x}\| = \sup \left\{ \| ( \tilde{x} \mid y )_{A} \|
\colon  y \in \B\big(\X\big)
\right\}
\end{equation}
One of the main goals of this paper is to study certain modules over $L^p$-operator algebras that have an analog of the pairing \eqref{DualC*pair}. When $p=2$, these modules include Hilbert C*-modules. We will see however, that in general it is not always true that Equations \eqref{DualC*pairR1} and \eqref{DualC*pairR2} 
hold, and that it is not always easy to decide when they do. Indeed, this could be the case even for $p=2$ (in the non-selfadjoint case) as it is shown in the second part of Example \ref{AA}, where we provide instances for which the $p$-analogs of Equations \eqref{DualC*pairR1} and \eqref{DualC*pairR2} fail for every $p \in [1, \infty)$. 

\section{$L^p$-modules and C*-likeness}\label{LpandC*like}

We begin by recalling the general definition of an $L^p$-analog of Hilbert modules. The following
is Definition 3.1 in \cite{Delfin24lpmodules}, which was originally motivated
by Theorem \ref{ConcreteMod} 
above. 

\begin{definition}\label{DefLpMod}
    Let $(\Omega_0, \mathfrak{M}_0, \mu_0)$ and 
    $(\Omega_1, \mathfrak{M}_1, \mu_1)$ be measure spaces, 
    let $p \in [1, \infty]$, 
    and let $A \subseteq \Li(L^p(\mu_0))$ be an $L^p$-operator algebra. 
    An $L^p$-module over $A$ is a pair $(\Y, \X)$ such that 
    $\X \subseteq \Li(L^p(\mu_0), L^p(\mu_1))$ and $\Y \subseteq \Li(L^p(\mu_1), L^p(\mu_0))$ 
    are closed subspaces satisfying 
    \begin{enumerate}
        \item $xa \in \X$ for all $x \in \X$, $a \in A$,  
        \item $yx \in A$ for all $x \in \X$, $y \in \Y$, \label{Lp2}
        \item $ay \in \Y$ for all $y \in \Y$, $a \in A$. 
    \end{enumerate}
Condition \eqref{Lp2} gives a pairing $(-\mid -)_A \colon \Y \times \X \to A$ defined by $(y \mid x)_A = yx$. 
We say that the module $(\Y, \X)$ is \textit{C*-like}
if both the norms in $\X$ and $\Y$ are recovered by the 
pairing, that is for every $x \in \X$ and 
$y \in \Y$ we have 
\begin{enumerate}
 \setcounter{enumi}{3}
    \item $\|x\| = \sup \{ \| (y \mid x)_A \| 
    \colon y \in \B(\Y) \}$, \label{C*likeness1}
    \item $\|y\| = \sup\{ \| (y \mid x)_A \|
    \colon x \in \B(\X)\}$. \label{C*likeness2}
\end{enumerate}
    
\end{definition}

\begin{remark}
Let $(\Y, \X)$ be an $L^p$-module over $A$. 
Then $(\Y, \X)$ is a Banach $A$-pair in the sense of \cite{Laf02, Para09}.
\end{remark}

\begin{remark}
        Let $(\Y, \X)$ be an $L^p$-module over $A$. Then both $\X$ and $\Y$ are naturally concrete $p$-operator spaces 
as defined in Section 4.1 of \cite{Daws10}.
\end{remark}

\begin{remark}\label{DefinitionLpModRmks}
For an $L^p$-module $(\Y, \X)$  over $A$,
observe that the pairing $\Y \times \X \to A$ satisfies 
\[
\|(y \mid x )_A \| \leq \| y \|\| x\|,
\]
for all $x \in \X$, $y \in \Y$. Hence, the C*-likeness conditions \eqref{C*likeness1} and 
\eqref{C*likeness2}
in Definition \ref{DefLpMod}
can be restated as 
\begin{enumerate}
 \setcounter{enumi}{3}
    \item $\|x\| \leq \sup \{ \| (y \mid x)_A \| 
    \colon y \in \B(\Y) \}$,
    \item $\|y\| \leq  \sup\{ \| (y \mid x)_A \|
    \colon x \in \B(\X)\}$.
\end{enumerate}
\end{remark}

Next we point out that the notion of (C*-like) $L^p$-modules 
does indeed generalize
right Hilbert C*-modules.

\begin{proposition}
    Let $A$ be a C*-algebra and let $\X$ be any right Hilbert $A$-module, then $( \widetilde{\X},\X)$ is a C*-like $L^2$-module over $A$.
\end{proposition}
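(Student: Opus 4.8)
The plan is to reduce everything to the concrete picture furnished by Theorem \ref{ConcreteMod} and Proposition \ref{ConcreteConj}, realize the ambient Hilbert spaces as $L^2$-spaces, and then read off C*-likeness directly from Equations \eqref{DualC*pairR1} and \eqref{DualC*pairR2} (which themselves rest on Lemma \ref{C*innerRec}). The only genuine content to check is that the chain of identifications is simultaneously isometric and compatible with the pairing; the algebraic axioms of Definition \ref{DefLpMod} are then routine.

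First I would fix a faithful nondegenerate representation $\varphi_A \colon A \to \Li(\Hi_0)$ and apply Theorem \ref{ConcreteMod} to obtain a Hilbert space $\Hi_1$ together with an isometric linear map $\pi_\X \colon \X \to \Li(\Hi_0, \Hi_1)$ whose image $\pi_\X(\X)$ is a concrete right Hilbert $\varphi_A(A)$-module. Since every Hilbert space is isometrically an $\ell^2(J)$, i.e.\ $L^2$ of a counting measure, I may write $\Hi_0 = L^2(\mu_0)$ and $\Hi_1 = L^2(\mu_1)$. Identifying $A$ with $\varphi_A(A)$ and $\X$ with the closed subspace $\pi_\X(\X)$, I then set $\Y := \pi_\X(\X)^* \subseteq \Li(L^2(\mu_1), L^2(\mu_0))$, which is closed because the adjoint is an isometric map. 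By Proposition \ref{ConcreteConj} this $\Y$ is isometrically isomorphic to $\widetilde{\X}$ as a left Hilbert module, via $\widetilde{x} \mapsto \pi_\X(x)^*$.

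Next I would verify conditions (1)--(3) of Definition \ref{DefLpMod} at $p=2$. Condition (1) is the defining closure property of the concrete right Hilbert module $\pi_\X(\X)$. For condition \eqref{Lp2}, any $y \in \Y$ has the form $y = z^*$ with $z \in \X$, so $yx = z^*x \in A$ by the property $x^*y \in A$ of concrete Hilbert modules. For condition (3), writing $y = z^*$ and using that $A$ is selfadjoint, one has $ay = a z^* = (z a^*)^*$ with $z a^* \in \X$ (by condition (1)), hence $ay \in \X^* = \Y$. Thus $(\Y, \X)$ is an $L^2$-module over $A$, and under the identification $\widetilde{\X} \cong \Y$ its pairing reads $(\widetilde{z} \mid x)_A = z^*x = \langle z, x \rangle_A$, which is exactly the C*-pairing \eqref{DualC*pair}.

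Finally, for C*-likeness I would observe that all the maps involved — $x \mapsto \widetilde{x}$, $\pi_\X$, the adjoint $z \mapsto z^*$, and the Hilbert-space-to-$L^2$ identifications — preserve norms, so $\B(\Y)$ corresponds to $\B(\widetilde{\X})$, and the operator norm on $\X$ agrees with the module norm via the C*-identity $\|\pi_\X(x)\| = \|\pi_\X(x)^*\pi_\X(x)\|^{1/2} = \|\langle x, x\rangle_A\|^{1/2}$. Transporting along these identifications, Equation \eqref{DualC*pairR1} becomes precisely condition \eqref{C*likeness1} and Equation \eqref{DualC*pairR2} becomes condition \eqref{C*likeness2}. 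The main obstacle is therefore not analytic but bookkeeping: confirming that the identifications are simultaneously isometric and pairing-compatible, so that the suprema over $\B(\widetilde{\X})$ in \eqref{DualC*pairR1}--\eqref{DualC*pairR2} faithfully transport to suprema over $\B(\Y)$ (resp.\ $\B(\X)$) of the operator product $yx$ appearing in the pairing of Definition \ref{DefLpMod}. Once this compatibility is in place, C*-likeness is immediate.
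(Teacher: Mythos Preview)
Your proposal is correct and follows essentially the same route as the paper: the paper's proof is a one-line appeal to Theorem \ref{ConcreteMod}, Proposition \ref{ConcreteConj}, and Equations \eqref{DualC*pairR1}--\eqref{DualC*pairR2}, and you have simply unpacked that appeal by verifying the module axioms and the isometric compatibility of the identifications explicitly.
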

\begin{proof}
    This follows at once from combining Equations \eqref{DualC*pairR1} and \eqref{DualC*pairR2} above together with Theorem \ref{ConcreteMod} and Proposition \ref{ConcreteConj}. See also Example 3.4 in \cite{Delfin24lpmodules}. 
\end{proof} 

\begin{example}\label{AA}
Fix $p \in [1, \infty)$. Let $A$ be any $L^p$-operator algebra that has
a contractive approximate unit. That is, there is a net $(e_\lambda)_{\lambda \in \Lambda}$ in $A$ such that 
$\|e_\lambda\| \leq 1$ for all $\lambda \in \Lambda$ and
\[
\| e_\lambda a-a\|, \|ae_\lambda-a\| \to 0
\]
for all $a \in A$. Then, it is immediate to check that the pair $(A, A)$
is a C*-like $L^p$-module over $A$. 

Notice that the lack of a contractive approximate unit can result in the C*-like condition to fail for $(A,A)$. Indeed,   let $T_2^p \subset \Li(\ell_2^p)$ be the algebra of strictly upper triangular matrices acting on $\ell_2^p$. This algebra will be revisited in Example \ref{ex_UpperTriang} below. Observe that $T_2^p$ is an $L^p$-operator algebra and in fact is isometrically isomorphic to the complex numbers $\C$ with the trivial multiplication 
(i.e. $ab=0$ for all $a,b \in \C$). It is then clear that $(T_2^p,T_2^p)$ is an $L^p$-module over $\C$. However, $T_2^p$ does not have a contractive approximate unit and the fact that multiplication is trivial gives at once that $(T_2^p,T_2^p)$ is not C*-like (see Example 3.5 in \cite{Delfin24lpmodules} for the details). This is still valid when $p=2$; that is, the pair $(T_2^2,T_2^2)$ is an $L^2$-module over $\C$ (where $\C$ is thought of as the non-selfadjoint $L^2$-operator algebra $T_2^2$) that is not C*-like.
\end{example}

\begin{example}\label{LpSpacesAsModules}
    Let $(\Omega, \mathfrak{M}, \mu)$ be any measure space and let $p \in [1, \infty]$. Observe that $\Li(\ell_1^p, L^p(\mu))$ is isometrically isomorphic to $L^p(\mu)$ via the map $a \mapsto a(1)$. Similarly, for $p \not\in \{1, \infty\}$ Hölder duality gives that $\Li(L^p(\mu), \ell_1^p)$ is isometrically isomorphic to $L^{p'}(\mu)$ 
where $p'$ is the Hölder conjugate for $p$. This last statement is still true for $p=1$ if $\mu$ is sigma-finite, and also for $p=\infty$ when $\Omega$ is a finite set with any weighted counting measure. Thus, from Theorem \ref{HoldDual} we obtain: 
\begin{enumerate}
    \item If $p \in (1, \infty)$, then $(L^{p'}(\mu), L^{p}(\mu))$ is a C*-like $L^p$-module over $\C=\Li(\ell_1^p)$. \label{LpLq}
     \item If $p=1$ and $\mu$ is sigma-finite, then $(L^{\infty}(\mu), L^{1}(\mu))$ is a C*-like $L^1$-module over $\C=\Li(\ell_1^p)$.\label{L1Lin}
     \item  For $p \in [1, \infty]$ and $n \in \Z_{\geq 1}$ we have that $(\ell_n^{p'}, \ell_n^{p})$ is a C*-like $L^p$-module over $\C=\Li(\ell_1^p)$. \label{lplq}
\end{enumerate}
In all three cases above, the module pairing is the standard one from Equation \eqref{LqLpPairing}. That is, 
$(\eta \mid \xi )_\C = \langle \eta , \xi \rangle_p$. 
\end{example}

We can also get 
new $L^p$-modules out of old ones: 

\begin{example}[Row-column module]\label{colrow}
Fix $p \in [1, \infty)$ and $n \in \Z_{\geq 1}$, let $A \subseteq \Li(L^p(\mu_0))$ be an 
$L^p$-operator algebra, and let $(\Y, \X)$ be an 
$L^p$-module over $A$ as in Definition \ref{DefLpMod}. 
Let $M^p_{1,n}(\Y)$ the space 
of row $n$-dimensional vectors with entries in $\Y$ and let $M^p_{n,1}(\X)$ be the space of column $n$-dimensional vectors 
with entries in $\X$. 
Notice that we can equip both $M^p_{1,n}(\Y)$ and 
$M^p_{n,1}(\X)$
with natural $p$-operator space structures by realizing 
them as the module from Example \ref{LpSpacesAsModules}\eqref{lplq} tensored with $(\Y, \X)$ via the spatial tensor product described 
in Theorem \ref{SpatialTP}:
\begin{align*}
M^p_{1,n}(\Y) & = \ell_n^{p'} \otimes_p \Y \cong \Li(\ell_n^p, \ell_1^p) \otimes_p \Y \subseteq \Li(L^p(\nu_n \times \mu_1), L^p(\mu_0) )\\
M^p_{n,1}(\X) & = \ell_n^p \otimes_p \X \cong \Li(\ell_1^p, \ell_n^p) \otimes_p \X \subseteq \Li(L^p(\mu_0), L^p(\nu_n \times \mu_1) ),
\end{align*}
where $\nu_n$ is counting measure on $\{1, \ldots, n\}$. 
Then, $( M^p_{1,n}(\Y), M^p_{n,1}(\X))$ is an $L^p$-module over 
$A$. Furthermore, observe that $( M^p_{1,n}(\Y), M^p_{n,1}(\X))$ naturally corresponds to the direct sum of the module $(\Y, \X)$: 
\[
( M^p_{1,n}(\Y), M^p_{n,1}(\X)) = \bigoplus_{j=1}^n (\Y, \X)
\]
at the spatial level. We will call $( M^p_{1,n}(\Y), M^p_{n,1}(\X))$ the \textit{row-column module of $(\Y, \X)$}. 
\end{example}

\begin{remark}\label{matrixNorm}
    For any $p \in [1,\infty)$, as a particular case of Example \ref{LpSpacesAsModules}\eqref{lplq} we have that the pair $(\C, \C)=(\ell_1^{p'}, \ell_1^{p})$ is an $L^p$-module over $\C$. Observe then that the row-column module of $(\C, \C)$ (see Example \ref{colrow}) is in fact $( M_{1,n}^p(\C),M_{n,1}^p(\C))$, where we are now interpreting the pair as in Notation \ref{MatrixCOnv}. That is, the two potential interpretations for $M_{1,n}^p(\C)$ and $M_{n,1}^p(\C)$ have the same meaning. 
\end{remark} 

For convenience and future reference we explicitly state 
below the $L^p$-module structure for 
$( M^p_{1,n}(\Y),M^p_{n,1}(\X))$ from Example \ref{colrow} above. For $\vf{x} \in M^p_{n,1}(\X)$, 
$\vf{y} \in  M^p_{1,n}(\Y)$, and $a \in A$ we have 
\[
\vf{x} \cdot a = \begin{bmatrix}
    x_1 \\
    \vdots \\
    x_n
\end{bmatrix} \cdot a = 
\begin{bmatrix}
    x_1a \\
    \vdots \\
    x_na
\end{bmatrix},
\ \ \ \ 
a \cdot \vf{y}=a \cdot \begin{bmatrix}
    y_1 & \ldots & y_n
\end{bmatrix} = \begin{bmatrix}
    ay_1 & \ldots & ay_n
\end{bmatrix}, 
\]
and 
\[ (\vf{y} \mid \vf{x})_A = 
\begin{bmatrix}
    y_1 & \ldots & y_n
\end{bmatrix} \cdot \begin{bmatrix}
    x_1 \\
    \vdots \\
    x_n
\end{bmatrix} = \sum_{j=1}^n y_jx_j = \sum_{j=1}^n (y_j \mid x_j)_A.
\]
Moreover, for $\vf{x} \in M^p_{n,1}(\X)$ we have 
\[
\| \vf{x} \|= \sup \left\{ \Bigg(\sum_{j=1}^n  \left\|x_j\xi\right\|_p^p\Bigg)^{1/p} \colon 
\xi \in \B(L^p(\mu_0))
\right\},
\]
and for $\vf{y} \in  M^p_{1,n}(\Y)$ 
\[
\| \vf{y} \|= \sup \left\{  \Bigg\| \sum_{j=1}^n y_j\eta_j\Bigg\|_p \colon 
(\eta_1, \ldots, \eta_n) \in \B\big( L^p(\nu_n \times \mu_1) \big)  
\right\}.
\]

One of the main goals of this paper is to study the C*-likeness for certain $L^p$-modules of the form $( M^p_{1,n}(\Y),M^p_{n,1}(\X))$. This problem relies on norm computations, and even when both $\X$ and $\Y$ are finite dimensional modules, all the norms boil down to $p$-operator norms which are known to be NP-hard to compute (see \cite{HendOlshec10}). In Section \ref{Afdim} we investigate this problem when $(\X,\Y) = (A, A)$ (see Example \ref{AA}) where $A$ will be a subalgebra of  $M_d^p(\C)$ for $d \in \Z_{\geq 1}$.

\begin{example}\label{INFcolrow}
    Let $p \in [1, \infty)$. As a generalization of 
    Example \ref{colrow}, 
    given any $L^p$-module $(\Y, \X)$ over an $L^p$-operator
    algebra $A \subseteq \Li(L^p(\mu_0))$, we now 
    define 
    \begin{align*}M^p_{1, \infty}(\Y) & =\ell^{p'}(\Z_{\geq 1}) \otimes_p \Y, \\
        M^p_{\infty, 1}(\X) & =\ell^p(\Z_{\geq 1})\otimes_p \X.
    \end{align*}
    Then $( M^p_{1,\infty}(\Y),M^p_{\infty, 1}(\X))$ is again an $L^p$-module over $A$, which is interpreted as a countable direct sum of the module $(\Y, \X)$. In fact, when $(\Y, \X)=(A, A)$, it follows from Theorem 3.13 and Proposition 3.15 in \cite{Delfin24lpmodules} that 
    $(M^p_{1,\infty}(A),M^p_{\infty, 1}(A))$ is a generalization of the standard Hilbert module $\Hi_A=\ell^2 \otimes A$ of a C*-algebra $A$. 
\end{example}

\section{$L^p$-modules over matrix algebras}\label{Afdim}
Let $p \in [1, \infty)$, let $d,n \in \Z_{\geq 1}$, and let $A$ be a subalgebra of 
$M_d^p(\C)$. Then $A$ is naturally an $L^p$-operator algebra acting on $\ell_d^p$, making $(A, A)$ an 
$L^p$-module over $A$ as in Example \ref{AA}. Further, 
if $A$ contains the unit of $M_d^p(\C)$ (that is 
the $d \times d$ identity matrix), then $(A, A)$ 
is a C*-like $L^p$-module.

For the rest of this section we consider the $L^p$-module $(M^p_{1,n}(A),M^p_{1,n}(A))$ for
$n \in \Z_{\geq 1}$. Notice that 
\[
M^p_{1,n}(A) \subseteq M^p_{n,1}(M_d^p(\C)) = M^p_{d,nd}(\C),
\]
and similarly  \[
M^p_{n,1}(A) \subseteq M^p_{n,1}(M_d^p(\C)) = M^p_{nd,d}(\C).
\]
In fact, the norms in $M^p_{n,1}(A)$ and $M^p_{1,n}(A)$ are the inherited 
operator norms from $M^p_{nd,d}(\C)$ 
and $M^p_{d,nd}(\C)$, respectively (see Notation \ref{MatrixCOnv}). Therefore, 
it now follows from Remark \ref{DefinitionLpModRmks} and the fact that we are working in only finite dimensional vector spaces that 
the module $(M^p_{1,n}(A),M^p_{n,1}(A))$ will 
be C*-like if we prove that 
for any $\vf{a} \in M^p_{n,1}(A)$
\begin{equation}\label{leqA}
\| \vf{a} \|_{M_{nd,d}^p(\C)} \leq 
\max \left\{
\| (\vf{b} \mid \vf{a})_A \|
\colon 
\vf{b}\in \B\big( {M^p_{1,n}(A)} \big)
\right\}, 
\end{equation}
and that for all $\vf{b} \in M^p_{1,n}(A)$
\begin{equation}\label{leqB}
\| \vf{b} \|_{M_{d,nd}^p(\C)} \leq  \max \left\{ 
\| (\vf{b} \mid \vf{a})_A \|
\colon 
 \vf{a}\in \B\big( M^p_{n,1}(A) \big)
\right\}.
\end{equation}
The following results provide 
instances of $A \subseteq M_d^p(\C)$ for which $(M^p_{1,n}(A),M^p_{n,1}(A))$ is C*-like. 

We will start with the case in which $A$ is $M_d^p(\C)$. 

\begin{theorem}\label{MainT1}
    Let $p \in [1, \infty)$, let $n,d \in \Z_{\geq 1}$, and let $A=M_{d}^p(\C)$. 
    Then $(M^p_{1,n}(A),M^p_{n,1}(A))$ is a C*-like $L^p$-module over $A$.
\end{theorem}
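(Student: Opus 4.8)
The plan is to verify the two inequalities \eqref{leqA} and \eqref{leqB}, which by the discussion preceding the statement (together with finite-dimensionality, making every supremum an attained maximum) are equivalent to C*-likeness. For both I would express the operator norm on the left-hand side via the double maximization of Corollary \ref{FiniteDopNorm}, and then exhibit an explicit norm-one witness on the right-hand side built from a single rank-one operator per slot. The key structural input is that $A = M_d^p(\C)$ is the \emph{full} matrix algebra, so every rank-one map on $\ell_d^p$ lies in $A$; this is exactly what lets the witnesses be chosen freely.

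For \eqref{leqA}, fix $\vf{a} = (a_1, \ldots, a_n)^T \in M^p_{n,1}(A)$, viewed as an element of $\Li(\ell_d^p, \ell_{nd}^p)$. Corollary \ref{FiniteDopNorm} yields $\xi \in \B(\ell_d^p)$ and $\zeta = (\zeta_1, \ldots, \zeta_n) \in \B(\ell_{nd}^{p'})$, with $\zeta_j \in \ell_d^{p'}$, such that (after a harmless unit-scalar rotation of $\zeta$) $\|\vf{a}\| = \sum_{j=1}^n \langle \zeta_j, a_j \xi\rangle_p$. I would then fix any $w \in \ell_d^p$ with $\|w\|_p = 1$ and set $b_j \in A$ to be the rank-one operator $b_j(v) = \langle \zeta_j, v\rangle_p\, w$, so that $\vf{b} = (b_1, \ldots, b_n) \in M^p_{1,n}(A)$. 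A direct computation gives $\vf{b}\,\eta = \langle \zeta, \eta\rangle_p\, w$ for $\eta \in \ell_{nd}^p$, whence $\|\vf{b}\| = \|\zeta\|_{p'} \le 1$ by Hölder duality (Theorem \ref{HoldDual}); and $(\vf{b} \mid \vf{a})_A\, \xi = \bigl(\sum_{j=1}^n \langle \zeta_j, a_j\xi\rangle_p\bigr) w = \|\vf{a}\|\, w$, so $\|(\vf{b}\mid\vf{a})_A\| \ge \|\vf{a}\|$. This is the required witness.

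For \eqref{leqB}, fix $\vf{b} = (b_1, \ldots, b_n) \in M^p_{1,n}(A)$, now viewed in $\Li(\ell_{nd}^p, \ell_d^p)$. Corollary \ref{FiniteDopNorm} gives a maximizing $\eta = (\eta_1, \ldots, \eta_n) \in \B(\ell_{nd}^p)$, for which $\|\vf{b}\eta\|_p = \|\vf{b}\|$. I would fix any $\sigma \in \ell_d^{p'}$ with $\|\sigma\|_{p'} = 1$ together with a vector $v \in \ell_d^p$ with $\|v\|_p = 1$ and $\langle \sigma, v\rangle_p = 1$, and set $a_j \in A$ to be the rank-one operator $a_j(u) = \langle \sigma, u\rangle_p\, \eta_j$, so $\vf{a} = (a_1, \ldots, a_n)^T \in M^p_{n,1}(A)$. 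The explicit column-norm formula for $M^p_{n,1}(A)$ then gives $\|\vf{a}\| = \bigl(\sup_{\|\xi\|_p \le 1} |\langle\sigma,\xi\rangle_p|\bigr)\,\|\eta\|_p = \|\sigma\|_{p'}\,\|\eta\|_p \le 1$, while $(\vf{b}\mid\vf{a})_A\, v = \langle\sigma,v\rangle_p\, \vf{b}\eta = \vf{b}\eta$ forces $\|(\vf{b}\mid\vf{a})_A\| \ge \|\vf{b}\eta\|_p = \|\vf{b}\|$, as needed. (Alternatively, \eqref{leqB} should follow from \eqref{leqA} by transposition via Corollary \ref{a_p=a^T_q}, exchanging $p$ and $p'$; but the $p=1$ endpoint would then require separate attention, which the direct construction avoids.)

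I do not expect a serious obstacle here: the two constructions are genuinely dual and elementary. The only points demanding care are verifying $\|\vf{b}\| \le 1$ and $\|\vf{a}\| \le 1$ through the Hölder estimate and the explicit row/column norm formulas, and observing that the maximizers from Corollary \ref{FiniteDopNorm} norm the relevant functionals. The real content is that $A$ contains \emph{all} rank-one operators on $\ell_d^p$; this is precisely what fails for a general (non-block-diagonal) subalgebra $A \subseteq M_d^p(\C)$, and is the reason the block-diagonal hypothesis in the main theorem is needed rather than being provable by the same one-line witnesses.
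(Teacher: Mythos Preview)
Your proposal is correct and follows essentially the same approach as the paper: in both directions you and the paper build rank-one witnesses, with the paper's $\vf{b}_\eta$ and $\vf{a}_\zeta$ being exactly your constructions specialized to $w=e_1$ and $\sigma=v=e_1$, respectively. The only cosmetic difference is that the paper parametrizes a family of witnesses and then maximizes, whereas you first invoke Corollary~\ref{FiniteDopNorm} to fix a maximizer and then write down a single witness; the underlying idea---that $A=M_d^p(\C)$ contains every rank-one operator, so the witnesses can be chosen freely---is identical.
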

\begin{proof}
We first prove that the inequality in Equation \eqref{leqA} holds. To do so, 
for each $\eta \in \ell_{nd}^{p'}$ we define $\vf{b}_\eta \in M^p_{1,n}(M_d^p(\C))$ by letting 
\begin{equation}\label{b_etas}
   \vf{b}_\eta = \begin{bmatrix}
    \eta^T \\
    {\bf{0}}_{(d-1), nd}
\end{bmatrix}, 
\end{equation}
where, for $j,k \in \Z_{\geq 1}$ the notation ${\bf{0}}_{j,k}$ represents the $j \times k$ matrix with zero entries. 
Hence, for any fixed $\vf{a} \in M^p_{n,1}(M_d^p(\C))$ and $\xi \in \ell_d^p$, 
we have 
\begin{equation}\label{b_etaaxi}
(\vf{b}_\eta \mid \vf{a})_A\xi = 
\begin{bmatrix}
    \langle \eta, \vf{a}\xi \rangle_p \\
    {\vf{0}}_{d-1, 1}
\end{bmatrix} .
\end{equation}
Moreover, using Theorem \ref{HoldDual} at the last step gives 
\[
\| \vf{b}_\eta \|_{M_{d,nd}^p(\C)} = \max\{\|\vf{b}_\eta \zeta \| \colon  \zeta \in \B(\ell_{nd}^p)\}= \max \{ |\langle \eta, \zeta\rangle_p| \colon \zeta \in \B(\ell_{nd}^p) \} = \|\eta\|_{p'}.
\]
This shows that $\{b_\eta \colon \eta \in \B\big(\ell_{nd}^{p'}\big)\} \subseteq \B(M^p_{d,nd}(\C))$ and therefore 
\[
\max \{ 
\| (\vf{b} \mid \vf{a})_A \|
\colon  \vf{b} \in \B( M_{d,nd}^p(\C))  
\}
 \geq \max\{ \| (\vf{b}_\eta \mid \vf{a})_A \|
\colon \eta \in \B(\ell_{nd}^{p'}) \}.
\]
Thus, using \eqref{b_etaaxi} at the second 
step and
Corollary \ref{FiniteDopNorm} at the last step, 
we now get
\begin{align*}
\max\{ \| (\vf{b}_\eta \mid \vf{a})_A \|
\colon \eta \in \B\big(\ell_{nd}^{p'}\big) \} 
& = 
 \max\{ \| (\vf{b}_\eta \mid \vf{a})_{A}\xi \|_{p}
\colon \eta \in \B\big(\ell_{nd}^{p'}\big), \xi \in \B(\ell_d^p) \} \\
& = 
 \max\{ | \langle \eta, \vf{a}\xi \rangle_p|
\colon \eta \in \B\big(\ell_{nd}^{p'}\big), \xi \in \B(\ell_d^p) \} \\
& = \| \vf{a} \|_{M_{nd,d}^p(\C)}.
\end{align*}
This proves the desired inequality from Equation \eqref{leqA}. 

Next, we show that \eqref{leqB} also holds. The idea is similar, but this time we do not require Hölder duality. 
For any $\zeta \in \ell_{nd}^p$ we now 
define $\vf{a}_\zeta \in M^p_{n,1}(M_d^p(\C))$ 
as
\begin{equation}\label{a_zetas}
  \vf{a}_\zeta = \begin{bmatrix}
    \zeta & 
    \vf{0}_{nd, (d-1)}
\end{bmatrix}.  
\end{equation}
A direct computation shows that for any $\xi \in \ell_{d}^p$ we get $\vf{a}_\zeta \xi = \xi(1) \zeta$, 
whence for any $\vf{b} \in M^p_{1,n}(M_d^p(\C))$ 
we get
\begin{equation}\label{bazetaxi}
    (\vf{b} \mid \vf{a}_\zeta)_A\xi = \xi(1)\vf{b}\zeta.
\end{equation}
In particular, observe that 
\[
\|\vf{a}_\zeta\|_{M_{nd,d}^p(\C)}=\max_{\xi \in \B(\ell_d^p)} \|\vf{a}_\zeta\xi\|_p=
\max_{\xi \in \B(\ell_d^p)} |\xi(1)|\| \zeta\|_p=\|\zeta\|_p,
\]
and therefore 
\[
\max\{\| (\vf{b} \mid \vf{a})_A \|
\colon  \vf{a} \in \B( M_{nd,d}^p(\C))  \}
\geq  \max\{    \|(\vf{b} \mid \vf{a}_\zeta)_A\|\colon \zeta \in \B(\ell_{nd}^p)\}.
\]
Finally, using Equation \eqref{bazetaxi} at the second step, 
we conclude 
\begin{align*}
\max\{    \|(\vf{b} \mid \vf{a}_\zeta)_A\| \colon \zeta \in \B(\ell_{nd}^p)\} & = \max\{    \|(\vf{b} \mid \vf{a}_\zeta)_A\xi\|_p \colon \zeta \in \B(\ell_{nd}^p), \xi \in \B(\ell_d^p)\} \\
& = \max\{    |\xi(1)|\|\vf{b}\zeta\|_p \colon \zeta \in \B(\ell_{nd}^p), \xi \in \B(\ell_d^p)\}\\
& = \| \vf{b}\|_{M_{d,nd}^p(\C)}.
\end{align*}
The inequality in Equation \eqref{leqB} now follows and we are done. 
\end{proof}

    Observe that the proof of Theorem \ref{MainT1} relies 
    on the fact that the matrices $\vf{b}_\eta$ and 
    $\vf{a}_\zeta$ are actually elements of $M^p_{1,n}(A)$ 
    and $M^p_{n,1}(A)$ respectively. This in turn is 
    possible because, in this case, $A$ consists of all 
    matrices in $M_{d}^p(\C)$. Therefore, for a non-trivial 
    subalgebra of $M_{d}^p(\C)$ we can no longer 
    guarantee that $\vf{b}_\eta \in M^p_{1,n}(A)$ and 
    $\vf{a}_\zeta \in M^p_{n,1}(A)$. 
In Theorem \ref{MainT2} we will show that the C*-likeness of the 
    module $(M^p_{1,n}(A),M^p_{n,1}(A))$ remains valid for 
    any subalgebra $A$ of $M_{d}^p(\C)$ consisting of 
    block diagonal matrices. Particular cases of this more general setting include of 
    course $M_{d}^p(\C)$ itself and the algebra of diagonal matrices in  
    $M_{d}^p(\C)$. Thus, Theorem \ref{MainT1} can be seen 
    as a particular instance of Theorem \ref{MainT2} below. 
    However, to prove the general case we will need some pieces already proved in  Theorem \ref{MainT1}, which we record below in Corollary \ref{middleSteps}. The main reason of breaking it into two separate results is that it makes the exposition and arguments more clear. 

    \begin{corollary}\label{middleSteps}
            Let $p \in [1, \infty)$, let $n,d \in \Z_{\geq 1}$, and let $A=M_{d}^p(\C)$. For each $\vf{a} \in M_{nd,d}^p(\C)$ there is $\eta \in \B(\ell^{p'}_{nd})$ such that 
        \[
        \|\vf{a}\|_{M_{nd,d}^p(\C)} = \| (\vf{b}_\eta \mid \vf{a})_A\|,
        \]
        where $\vf{b}_\eta \in M^p_{1,n}(A)=M_{d,nd}^p(\C)$ is as in Equation \eqref{b_etas}. Similarly, for each $\vf{b} \in M_{d,nd}^p(\C)$ there is $\zeta \in \B(\ell^{p}_{nd})$ such that 
        \[
        \|\vf{b}\|_{M_{d,nd}^p(\C)} = \| (\vf{b} \mid \vf{a}_\zeta)_A\|,
        \]
         where $\vf{a}_\zeta \in M^p_{n,1}(A)=M_{nd,d}^p(\C)$ is as in Equation \eqref{a_zetas}.
    \end{corollary}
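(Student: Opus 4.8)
The plan is to read off both statements directly from the chain of equalities already established inside the proof of Theorem \ref{MainT1}, adding only the observation that in finite dimensions the relevant maxima are genuinely attained. Since the text preceding the corollary already flags it as a record of ``pieces already proved in Theorem \ref{MainT1}'', the proof is essentially an extraction argument rather than a new computation.

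For the first statement, I would start from the identity derived in the proof of Theorem \ref{MainT1}, namely
\[
\|\vf{a}\|_{M_{nd,d}^p(\C)} = \max\{\|(\vf{b}_\eta \mid \vf{a})_A\| \colon \eta \in \B(\ell_{nd}^{p'})\},
\]
where $\vf{b}_\eta$ is as in Equation \eqref{b_etas}. The only point to verify is that this maximum is realized by an honest element of $\B(\ell_{nd}^{p'})$. Because $nd$ is finite, the closed unit ball $\B(\ell_{nd}^{p'})$ is compact; the assignment $\eta \mapsto \vf{b}_\eta$ is linear and hence continuous, and $\vf{b} \mapsto \|(\vf{b} \mid \vf{a})_A\|$ is continuous. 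Thus the composite $\eta \mapsto \|(\vf{b}_\eta \mid \vf{a})_A\|$ attains its maximum over the compact set $\B(\ell_{nd}^{p'})$ at some point $\eta$, and this $\eta$ is the desired witness satisfying $\|\vf{a}\|_{M_{nd,d}^p(\C)} = \|(\vf{b}_\eta \mid \vf{a})_A\|$.

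For the second statement I would repeat the argument verbatim, replacing $\vf{b}_\eta$ by $\vf{a}_\zeta$ from Equation \eqref{a_zetas}, and using the companion equality
\[
\|\vf{b}\|_{M_{d,nd}^p(\C)} = \max\{\|(\vf{b} \mid \vf{a}_\zeta)_A\| \colon \zeta \in \B(\ell_{nd}^p)\}
\]
from the proof of Theorem \ref{MainT1} together with the compactness of $\B(\ell_{nd}^p)$ to produce a maximizing $\zeta \in \B(\ell_{nd}^p)$. Since the corollary merely repackages intermediate steps, there is essentially no obstacle: the only content beyond Theorem \ref{MainT1} is the compactness argument guaranteeing that the two maxima are realized by actual elements $\eta$ and $\zeta$, which is immediate because we are working in finite-dimensional spaces.
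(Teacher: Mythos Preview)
Your proposal is correct and essentially identical to the paper's own proof: both quote the two max-equalities established inside the proof of Theorem \ref{MainT1} and then invoke continuity of $\eta \mapsto \|(\vf{b}_\eta \mid \vf{a})_A\|$ (resp.\ $\zeta \mapsto \|(\vf{b} \mid \vf{a}_\zeta)_A\|$) together with compactness of the finite-dimensional unit balls to guarantee the maxima are attained. The only difference is cosmetic---you factor the continuity through the linear map $\eta \mapsto \vf{b}_\eta$, whereas the paper simply asserts continuity of the composite.
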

\begin{proof}
    In the proof of Theorem \ref{MainT1} it was shown that 
    \[
    \max\{ \| (\vf{b}_\eta \mid \vf{a})_A \|
\colon \eta \in \B\big(\ell_{nd}^{p'}\big) \} 
= \| \vf{a} \|_{M_{nd,d}^p(\C)}.
    \]
    and that 
    \[
    \max\{    \|(\vf{b} \mid \vf{a}_\zeta)_A\| \colon \zeta \in \B(\ell_{nd}^p)\} = \|\vf{b}\|_{M_{d,nd}^p(\C)}.
    \]
The desired result now follows by observing that $\eta \mapsto  \| (\vf{b}_\eta \mid \vf{a})_A\|$ is a continuous function $\ell^{p'}_{nd} \to \R$,  $\zeta \mapsto \|(\vf{b} \mid \vf{a}_\zeta)_A\|$ is a continuous function $\ell_{nd}^p \to \R$, and that both $\B(\ell^{p'}_{nd})$ and $\B(\ell^{p}_{nd})$ are compact due to finite dimension. 
\end{proof}

    We now begin by establishing some notation 
    and auxiliary results to prepare for Theorem \ref{MainT2}. 

    \begin{notation}
    Let $k \in \Z_{\geq 1}$, let $d_1, \ldots, d_k \in \Z_{\geq 1}$, for each $j \in \{1, \ldots, k\}$
    let $a_j$ be an $d_j \times d_j$ matrix, and let $d=d_1 + \cdots + d_k$. 
    Then $\op{diag}(a_1, \ldots, a_k)$ represents 
    the $d \times d$ block diagonal matrix that has the square matrices $a_1, \ldots, a_k$ along the diagonals and all the other entries equal to $0$. 
    \end{notation}

    \begin{definition}[Block-diagonal subalgebras]
       Let $p \in [1,\infty)$. For $d \in \Z_{\geq 1}$, we let $c(d,k)$ be a fixed composition of $d$ of length $k \in \Z_{\geq 1}\cap \Z_{\leq d}$. 
That is, $c(d,k)=(d_1, \ldots, d_k) \in \Z_{\geq 1}^k$ such that $d_1 + \cdots + d_k = d$. The \emph{block-diagonal} subalgebra of $M^p_d(\C)$ associated with the composition $c(d,k)$ is defined as 
\[
A_{c(d,k)} = 
\{\op{diag}(a_1, \ldots, a_k) \colon a_j \in M^p_{d_j}(\C)\}
\]
    \end{definition}
Observe that there is only one composition of length 
    $1$, namely $(d)$,
    and also there is only one composition of length $d$, 
    which is $(1,\ldots, 1) \in \Z_{\geq 1}^d$. 
    Furthermore, $A_{(d)}=M^p_d(\C)$ and $A_{(1,\ldots, 1)} \subseteq M^p_d(\C)$ is 
    the  subalgebra of diagonal matrices. 

    \begin{notation}\label{Block_Notation}   
        Let $p \in [1,\infty)$, let $d, n \in \Z_{\geq 1}$ and let $c(d,k)$ be a composition of $d$ of length $k$. If $\vf{a} \in M^p_{n,1}(A_{c(d,k)}) \subseteq M_{nd,d}^p(\C)$ and $j \in \{1,\ldots, k\}$, then we denote by 
        $\vf{a}(\colon, [j]) \in M^p_{nd,d_j}(\C)$ the submatrix of $\vf{a}$ 
        that only considers the \textit{columns} corresponding to the $j$-th block. For example, let $\vf{a} \in M^p_{2,1}(A_{(1,2,1)}) \subset M_{8,4}^p(\C)$ be given by 
        \begin{equation}\label{workingEx}
        \vf{a} = 
        \begin{bmatrix}
            a_1 & 0 & 0 & 0 \\
            0 & a_{11} & a_{12}& 0 \\
            0 & a_{21} & a_{22} & 0 \\
            0 & 0 & 0 & a_2\\
                 b_1 & 0 & 0 & 0 \\
            0 & b_{11} & b_{12}& 0 \\
            0 & b_{21} & b_{22} & 0 \\
            0 & 0 & 0 & b_2
        \end{bmatrix}.
        \end{equation}
        Then 
        \[
         \vf{a}(\colon, [1]) = 
                 \begin{bmatrix}
            a_1\\
            0  \\
            0  \\
            0 \\
            b_1     \\
            0  \\
            0 \\
            0 
        \end{bmatrix}, 
        \vf{a}(\colon, [2]) = 
                \begin{bmatrix}
       0 & 0  \\
       a_{11} & a_{12} \\
       a_{21} & a_{22}  \\
       0 & 0 \\
       0 & 0  \\
          b_{11} & b_{12}\\
         b_{21} & b_{22}  \\
        0 & 0 
        \end{bmatrix}, 
        \text{ and } 
          \vf{a}(\colon, [3]) =          \begin{bmatrix}
         0 \\
         0 \\
         0 \\
        a_2\\
         0 \\
          0 \\
          0 \\
           b_2
        \end{bmatrix}.
        \]
        Next, we will denote by $\vf{a}_{[j]} \in M^p_{nd_j,d_j}(\C)$ the submatrix of $\vf{a}(\colon, [j])$ that only extracts the $n$ building $j$-th blocks. For instance, using again the matrix from 
        Equation \eqref{workingEx} we have 
        \[
        \vf{a}_{[1]} = 
        \begin{bmatrix}
            a_1\\
            b_1
        \end{bmatrix},
         \vf{a}_{[2]} = 
                         \begin{bmatrix}
       a_{11} & a_{12} \\
       a_{21} & a_{22}  \\
          b_{11} & b_{12}\\
         b_{21} & b_{22}  
        \end{bmatrix}, 
        \text{ and } 
          \vf{a}_{[3]} =          \begin{bmatrix}
        a_2\\
           b_2
        \end{bmatrix}.
        \]
        Similarly, for any $\vf{b}\in M^p_{1,n}(A_{c(d,k)}) \subseteq M_{nd,d}^p(\C)$  and $j \in \{1,\ldots, k\}$, we denote by 
        $\vf{b}([j], \colon) \in M^p_{d_j,nd}(\C)$ to the submatrix of $\vf{b}$ 
        that only considers the \textit{rows} corresponding to the $j$-th block and denote by $\vf{b}_{[j]} \in M_{d_j,nd_j}^p(\C)$ to the submatrix of $\vf{b}([j], \colon)$ consisting of the $n$ building $j$-th blocks. 
    \end{notation}

Adopting the notation just introduced, for any $j \in \{1, \ldots, k\}$, we have 
\[
\| \vf{a}(\colon, [j]) \|_{M^p_{nd,d_j}(\C)} = \|\vf{a}_{[j]}\|_{M^p_{nd_j,d_j}(\C)} \ \text{ and } \  
\| \vf{b}([j], \colon) \|_{M^p_{d_j, nd}(\C)} = \|\vf{b}_{[j]}\|_{M^p_{d_j, nd_j}(\C)}.
\]
These norm equalities are straightforward to verify 
and will be used in the proof of Theorem \ref{MainT2}.
In fact, in the next lemma we show how these norms are used to compute the norm of the 
larger matrices $\vf{a} \in M_{n,1}(A_{c(d,k)})$ and $\vf{b} \in M_{1,n}(A_{c(d,k)})$. This, besides being an interesting result in its own right, will also be used repeatedly in the proof of Theorem \ref{MainT2}. 
    \begin{lemma}\label{KeyBlockNorms}
 Let $p \in [1, \infty)$, let $d \in \Z_{\geq 1}$, 
     let $n \in \Z_{\geq 1}$, and let $c(d,k)$ be a fixed composition of $d$.  For any $\vf{a} \in M^p_{n,1}(A_{c(d,k)})$ we have 
        \[
\| \vf{a} \|_{M_{nd,d}^p(\C)} = \max \left\{ \|\vf{a}(\colon, [j])\|_{M_{nd,d_j}^p(\C)} \colon j \in \{1, \ldots, k\}\right\}.
        \]
        Similarly, for $\vf{b} \in M^p_{1,n}(A_{c(d,k)})$ we have 
        \[
\| \vf{b} \|_{M_{d,nd}^p(\C)} = \max\left\{ \| \vf{b}([j], \colon ) \|_{M_{d_j, nd}^{p}(\C)} \colon j \in \{1, \ldots k\}\right\}. 
        \]
    \end{lemma}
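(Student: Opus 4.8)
The plan is to compute $\|\vf{a}\|$ directly from the block-diagonal structure, reducing the problem to an elementary optimization, and then to deduce the statement for $\vf{b}$ by transpose duality. First I would fix $\vf{a} \in M^p_{n,1}(A_{c(d,k)})$ and decompose the domain $\ell_d^p$ along the composition $c(d,k) = (d_1, \ldots, d_k)$, writing $\xi = (\xi_1, \ldots, \xi_k)$ with $\xi_j \in \ell_{d_j}^p$, so that $\|\xi\|_p^p = \sum_{j=1}^k \|\xi_j\|_p^p$. The essential observation is that, because each of the $n$ stacked matrices is block diagonal, the $j$-th input block only feeds the $j$-th output block in each copy; consequently the outputs coming from distinct blocks have disjoint support in $\ell_{nd}^p$. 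Reorganizing the output by the $k$ blocks rather than by the $n$ copies, a direct computation then yields the key identity
\[
\|\vf{a}\xi\|_p^p = \sum_{j=1}^k \|\vf{a}_{[j]}\xi_j\|_p^p,
\]
where $\vf{a}_{[j]} \in M^p_{nd_j, d_j}(\C)$ is the submatrix extracting the $n$ building $j$-th blocks.

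With this identity in hand, the norm computation reduces to maximizing $\sum_j \|\vf{a}_{[j]}\xi_j\|_p^p$ over all $\xi$ with $\sum_j \|\xi_j\|_p^p \leq 1$. For the upper bound I would estimate each summand by $\|\vf{a}_{[j]}\xi_j\|_p^p \leq \|\vf{a}_{[j]}\|^p\|\xi_j\|_p^p \leq \max_i \|\vf{a}_{[i]}\|^p\,\|\xi_j\|_p^p$ and sum, obtaining $\|\vf{a}\| \leq \max_j \|\vf{a}_{[j]}\|$. For the matching lower bound, let $j_0$ attain the maximum, pick (by finite-dimensional compactness of the ball) a unit vector $\xi_{j_0} \in \ell_{d_{j_0}}^p$ with $\|\vf{a}_{[j_0]}\xi_{j_0}\|_p = \|\vf{a}_{[j_0]}\|$, and feed in the vector that equals $\xi_{j_0}$ on the $j_0$-th block and $0$ elsewhere; the identity above then gives $\|\vf{a}\xi\|_p = \|\vf{a}_{[j_0]}\|$. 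Combining the two bounds and invoking the stated equality $\|\vf{a}(\colon,[j])\|_{M^p_{nd,d_j}(\C)} = \|\vf{a}_{[j]}\|_{M^p_{nd_j,d_j}(\C)}$ proves the first claim.

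For the statement about $\vf{b}$, rather than repeating the argument I would pass to the transpose. Since the transpose of a block-diagonal matrix is again block diagonal, $\vf{b}^T$ lies in $M^{p'}_{n,1}(A_{c(d,k)})$ (now in the $p'$-setting), and one checks that $\vf{b}^T(\colon, [j]) = (\vf{b}([j], \colon))^T$. Applying the already-proved first half of the lemma to $\vf{b}^T$ with $p'$ in place of $p$, and then converting each norm back through Corollary \ref{a_p=a^T_q}, yields
\[
\|\vf{b}\|_{M^p_{d,nd}(\C)} = \|\vf{b}^T\|_{M^{p'}_{nd,d}(\C)} = \max_j \|\vf{b}^T(\colon,[j])\|_{M^{p'}_{nd,d_j}(\C)} = \max_j \|\vf{b}([j],\colon)\|_{M^p_{d_j,nd}(\C)},
\]
as desired.

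The main obstacle is not analytic but organizational: the delicate point is correctly identifying the disjointness of the block outputs and reindexing the $nd$-dimensional output space by blocks to extract the clean additive identity for $\|\vf{a}\xi\|_p^p$. Once that identity is in place, what remains is an elementary convexity (in fact a linear-programming) argument at the level of the scalars $\|\xi_j\|_p^p$, together with the routine transpose reduction for the second half.
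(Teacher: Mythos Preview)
Your argument is essentially the paper's own proof, and the first half is fine. There is, however, a genuine gap in your transpose reduction for the second half: when $p=1$ you have $p'=\infty$, so ``the already-proved first half of the lemma with $p'$ in place of $p$'' is not available --- the lemma (and your identity $\|\vf{a}\xi\|_p^p=\sum_j\|\vf{a}_{[j]}\xi_j\|_p^p$) was only established for $p\in[1,\infty)$. Thus as written your proof of the statement for $\vf{b}$ covers only $p\in(1,\infty)$.

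The paper runs into exactly the same obstruction and resolves it by treating $p=1$ directly: the $1\to 1$ operator norm of a matrix is the maximum $\ell^1$-norm of its columns, and since the nonzero entries in each column of $\vf{b}$ lie entirely within a single row-block $\vf{b}([j],\colon)$, the desired equality $\|\vf{b}\|_{M^1_{d,nd}(\C)}=\max_j\|\vf{b}([j],\colon)\|_{M^1_{d_j,nd}(\C)}$ is immediate. Alternatively you could patch your approach by noting that your first-half argument also goes through at $p'=\infty$ once the additive identity is replaced by the obvious $\ell^\infty$ analogue $\|\vf{a}\xi\|_\infty=\max_j\|\vf{a}_{[j]}\xi_j\|_\infty$; either fix closes the gap.
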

    \begin{proof}
        Fix $\vf{a} \in M^p_{n,1}(A_{c(d,k)})$. For any 
        $\xi \in \B(\ell_d^p)$, we write $\xi=(\xi_1, \ldots, \xi_k)$ where each $\xi_j \in \ell_{d_j}^p$ and $\| \xi\|^p_p=\sum_{j=1}^k\|\xi_j\|_p^p \leq 1$. Then,
        \[
\| \vf{a} \xi \|_p^p = \sum_{j=1}^k \| \vf{a}(\colon, [j])\xi_j\|_p^p \leq \max \left\{ \|\vf{a}(\colon, [j])\|^p_{M_{nd,d_j}^p(\C)} \colon j \in \{1, \ldots, k\}\right\}.
        \]
        This proves $\| \vf{a} \|_{M_{nd,d}^p(\C)} \leq \max \left\{ \|\vf{a}(\colon, [j])\|_{M_{nd,d_j}^p(\C)} \colon j \in \{1, \ldots, k\}\right\}$. For the reverse inequality, for each $j \in \{1, \ldots, k\}$ let $\iota_j \colon \ell_{d_j}^p \to \ell_{d}^p$ be the natural isometric inclusion with respect to the fixed composition. Then, for $j \in \{1, \ldots, k\}$ and any $\zeta \in \B(\ell_{d_j}^p)$, we get 
        \[
\| \vf{a} \|_{M_{nd,d}^p(\C)} \geq \| \vf{a}\iota_j(\zeta)\|_p = \| a(\colon, [j])\zeta\|_p,
        \]
        from where it follows that $\| \vf{a} \|_{M_{nd,d}^p(\C)} \geq \| \vf{a}(\colon, [j])\|_{M_{nd,d_j}^p(\C)}$ for each 
        $j \in \{1, \ldots, k\}$, proving the desired reverse inequality. 

        For the second part of the statement, notice first that the result is immediate when $p=1$, for $ \| \vf{b} \|_{M_{d,nd}^1(\C)}$ is the maximum $1$-norm of the columns of $\vf{b}$ and similarly each $\|\vf{b}([j], \colon)\|_{M_{d_j, nd}^{1}(\C)}$ is the maximum $1$-norm of the columns of $\vf{b}([j],\colon)$, whence
        \[
        \| \vf{b} \|_{M_{d,nd}^1(\C)} = \max \left\{ \|\vf{b}([j], \colon)\|_{M_{d_j, nd}^{1}(\C)} \colon j \in \{1, \ldots, k\}\right\}.
        \]
        For $p \in (1,\infty)$, first observe that 
$\vf{b} \in M^p_{1,n}(A_{c(d,k)})$ implies $\vf{b}^T \in M_{n,1}^{p'}(A_{c(d,k)})$. Then an application of Corollary \ref{a_p=a^T_q}, followed by the first part of this proposition for $p' \in (1, \infty)$, and finally a second application of Corollary \ref{a_p=a^T_q}, yields
        \begin{align*}
        \| \vf{b} \|_{M_{d,nd}^p(\C)} = \| \vf{b}^{T} \|_{M_{nd,d}^{p'}(\C)} & = \max \left\{ \|\vf{b}^T(\colon, [j])\|_{M_{nd,d_j}^{p'}(\C)} \colon j \in \{1, \ldots, k\}\right\}\\
         & = \max \left\{ \|\vf{b}([j], \colon)\|_{M_{d_j, nd}^{p}(\C)} \colon j \in \{1, \ldots, k\}\right\},
        \end{align*}
        finishing the proof. 
    \end{proof}

    \begin{theorem}\label{MainT2}
    Let $p \in [1, \infty)$, let $n,d \in \Z_{\geq 1}$, and let $c(d,k)$ be a fixed composition of $d$.
    Then $(M^p_{1,n}(A_{c(d,k)}),M^p_{n,1}(A_{c(d,k)}))$ is a C*-like $L^p$-module over $A_{c(d,k)}$.
\end{theorem}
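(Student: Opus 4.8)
By Remark \ref{DefinitionLpModRmks} and the fact that every space in sight is finite dimensional, it suffices to verify the two inequalities \eqref{leqA} and \eqref{leqB}. The plan is to reduce each of these to the full-matrix case already handled, using Lemma \ref{KeyBlockNorms} to localize the computation to a single diagonal block. Indeed, that lemma says the norm of a stacked block-diagonal matrix is the maximum over $j$ of the norms of its blockwise pieces $\vf{a}_{[j]}$ (resp. $\vf{b}_{[j]}$), and each such piece lives over the \emph{full} matrix algebra $M_{d_j}^p(\C)$, where Corollary \ref{middleSteps} supplies an explicit optimal witness. The remaining task is to lift that witness back to a block-diagonal element of the right space and confirm it sits in the closed unit ball.

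To prove \eqref{leqA}, fix $\vf{a} \in M^p_{n,1}(A_{c(d,k)})$ and use Lemma \ref{KeyBlockNorms} to pick $j^\ast \in \{1, \ldots, k\}$ with $\|\vf{a}\|_{M_{nd,d}^p(\C)} = \|\vf{a}_{[j^\ast]}\|_{M_{nd_{j^\ast}, d_{j^\ast}}^p(\C)}$. Since $\vf{a}_{[j^\ast]} \in M^p_{n,1}(M_{d_{j^\ast}}^p(\C))$, Corollary \ref{middleSteps}, applied over $M_{d_{j^\ast}}^p(\C)$, furnishes $\eta \in \B(\ell^{p'}_{nd_{j^\ast}})$ with $\|\vf{a}_{[j^\ast]}\| = \|(\vf{b}_\eta \mid \vf{a}_{[j^\ast]})\|$, where $\vf{b}_\eta \in M^p_{1,n}(M_{d_{j^\ast}}^p(\C))$ is as in \eqref{b_etas}. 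I would then define $\vf{b} \in M^p_{1,n}(A_{c(d,k)})$ by requiring $\vf{b}_{[j^\ast]} = \vf{b}_\eta$ and $\vf{b}_{[l]} = 0$ for $l \neq j^\ast$; concretely, each entry of $\vf{b}$ is the block-diagonal matrix whose unique nonzero block sits in position $j^\ast$, so $\vf{b}$ genuinely belongs to $M^p_{1,n}(A_{c(d,k)})$. Applying Lemma \ref{KeyBlockNorms} again gives $\|\vf{b}\| = \max_l \|\vf{b}_{[l]}\| = \|\vf{b}_\eta\| = \|\eta\|_{p'} \leq 1$, so $\vf{b} \in \B(M^p_{1,n}(A_{c(d,k)}))$. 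Finally, because each entry of $\vf{b}$ has its only nonzero block in position $j^\ast$, the pairing $(\vf{b} \mid \vf{a})_A$ is block-diagonal with a single nonzero block, equal to $\vf{b}_{[j^\ast]}\vf{a}_{[j^\ast]} = (\vf{b}_\eta \mid \vf{a}_{[j^\ast]})$. As the norm of a block-diagonal matrix is the maximum of its block norms (the $n = 1$ case of Lemma \ref{KeyBlockNorms}), we get $\|(\vf{b} \mid \vf{a})_A\| = \|(\vf{b}_\eta \mid \vf{a}_{[j^\ast]})\| = \|\vf{a}\|$, which is \eqref{leqA}.

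Inequality \eqref{leqB} follows by the mirror-image argument: given $\vf{b} \in M^p_{1,n}(A_{c(d,k)})$, the second half of Lemma \ref{KeyBlockNorms} selects a block $j^\ast$ with $\|\vf{b}\| = \|\vf{b}_{[j^\ast]}\|$, and the second half of Corollary \ref{middleSteps} yields $\zeta \in \B(\ell^p_{nd_{j^\ast}})$ together with $\vf{a}_\zeta \in M^p_{n,1}(M_{d_{j^\ast}}^p(\C))$ achieving $\|\vf{b}_{[j^\ast]}\| = \|(\vf{b}_{[j^\ast]} \mid \vf{a}_\zeta)\|$; embedding $\vf{a}_\zeta$ into the $j^\ast$-th block and zero elsewhere produces $\vf{a} \in \B(M^p_{n,1}(A_{c(d,k)}))$ with $\|(\vf{b} \mid \vf{a})_A\| = \|\vf{b}\|$. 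I expect no serious obstacle beyond careful bookkeeping: the two genuinely nontrivial facts, namely that block-diagonal matrices decouple their norms across blocks and that the full-matrix problem admits an explicit optimizer, have already been isolated into Lemma \ref{KeyBlockNorms} and Corollary \ref{middleSteps}. The only point demanding care is checking that the lifted witnesses are block-diagonal of the prescribed composition $c(d,k)$ and hence really lie in $M^p_{1,n}(A_{c(d,k)})$ and $M^p_{n,1}(A_{c(d,k)})$; once that is verified, no new estimate on $p$-operator norms is required.
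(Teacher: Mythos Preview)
Your proof is correct and follows essentially the same route as the paper: both arguments localize via Lemma \ref{KeyBlockNorms} and invoke Corollary \ref{middleSteps} on each block. The only difference is organizational---the paper builds a witness in \emph{every} block simultaneously and then reads off the maximum, whereas you first select a single maximizing block $j^\ast$ and embed the witness there with zeros elsewhere; your version is slightly more economical but the substance is identical.
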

\begin{proof}
    As before, we only need to establish that the inequalities in Equations \eqref{leqA} and \eqref{leqB} hold for $A=A_{c(d,k)}$. 

    First, take $\vf{a} \in M_{n,1}(A_{c(d,k)})$ and for each $j \in \{1, \ldots, k\}$ let $\vf{a}_{[j]} \in M_{nd_j,d_j}^p(\C)$ be as defined in Notation \ref{Block_Notation}. Corollary \ref{middleSteps} gives $\eta_j \in \B(\ell_{nd_j}^{p'})$ such that 
    \begin{equation}\label{vert_block_norms}
    \| \vf{a}(\colon, [j])\|_{M_{nd,d_j}^p(\C)}= \| \vf{a}_{[j]}\|_{M_{nd_j,d_j}^p(\C)}=\|(\vf{b}_{\eta_j} \mid \vf{a}_{[j]})_{M^p_{d_j}(\C)}\|,
    \end{equation}
    where $\vf{b}_{\eta_j} \in M_{d_j,nd_j}^p(\C)$ is as defined in Equation \eqref{b_etas}. Next, for each $j \in \{1, \ldots, k\}$, we decompose $\eta_j$ into its $n$ $j$-th block parts: 
    \[
    \eta_j = (\eta_{j1}, \eta_{j2}, \ldots, \eta_{jn}).
    \]
    That is, each $\eta_{jl} \in \ell_{d_j}^{p'}$ and
    \[
    \eta_j^T = \begin{bmatrix}
        \eta_{j1}^T & \eta_{j2}^T & \ldots & \eta_{jn}^T
    \end{bmatrix}.
    \]
     Therefore $\vf{b}_{\eta_j}$ gets the following square blocks decomposition:
    \[
    \vf{b}_{\eta_j} = 
    \begin{bmatrix}
        \vf{b}_{\eta_{j1}} &  \vf{b}_{\eta_{j2}} & \ldots &  \vf{b}_{\eta_{jn}}
    \end{bmatrix},
    \]
    where $\vf{b}_{\eta_{jl}} \in M_{d_j}^p(\C)$ for any $l \in \{1, \ldots, n\}$.
Now, define 
    \[
    \vf{b}_0=
    \begin{bmatrix}
        \op{diag}(\vf{b}_{\eta_{11}},\ldots, \vf{b}_{\eta_{k1}})
        & 
        \op{diag}(\vf{b}_{\eta_{12}},\ldots, \vf{b}_{\eta_{k2}}) & \ldots &
         \op{diag}(\vf{b}_{\eta_{1n}},\ldots, \vf{b}_{\eta_{kn}})
    \end{bmatrix}.
    \]
    By construction we have $\vf{b}_0\in M_{1,n}^p(A_{c(d,k)})$
    and ${(\vf{b}_0)}_{[j]} = \vf{b}_{\eta_j}$, whence
    \[
    \|\vf{b}_0([j], \colon) \|_{M_{d_j,nd}^p(\C)}= \|{(\vf{b}_0)}_{[j]}\|_{M_{d_j,nd_j}^p(\C)}=\| \vf{b}_{\eta_j} \|_{M_{d_j,nd_j}^p(\C)} = \| \eta_j \|_{p'}.
    \]
    Thus, using Lemma \ref{KeyBlockNorms}, we get 
\[
\| \vf{b}_0 \|_{M_{d,nd}^p(\C)} = \max\left\{ \| \eta_j \|_{p'} \colon j \in \{1, \ldots k\}\right\}  \leq 1,
\]
that is $\vf{b}_0 \in \B(M_{1,n}^p(A_{c(d,k)}))$.
Furthermore, a direct computation using the block structure gives
\[
( \vf{b}_0 \mid \vf{a} )_{A_{c(d,k)}} = 
\op{diag}( 
(\vf{b}_{\eta_1} \mid \vf{a}_{[1]})_{M^p_{d_1}(\C)}, 
\ldots, 
(\vf{b}_{\eta_k} \mid \vf{a}_{[k]})_{M^p_{d_k}(\C)}
).
\]
Two applications of Lemma \ref{KeyBlockNorms} 
together with Equation \eqref{vert_block_norms}
now give
\begin{align*}
\| ( \vf{b}_0 \mid \vf{a} )_{A_{c(d,k)}} \| & = 
\max\{ \|(\vf{b}_{\eta_j} \mid \vf{a}_{[j]})_{M^p_{d_j}(\C)}\| \colon j \in \{1,\ldots, k\}\} \\
& =    \max\{ \| \vf{a}(\colon, [j])\|_{M_{nd,d_j}^p(\C)}   \colon j \in \{1,\ldots, k\}\} \\
& = \|\vf{a}\|_{M_{nd,d}^p(\C)},
\end{align*}
proving \eqref{leqA} when $A=A_{c(d,k)}$. 

Next, fix $\vf{b} \in M_{1,n}(A_{c(d,k)})$ and, for 
each $j \in \{1, \ldots, k\}$ we use Corollary \ref{middleSteps} 
to find $\zeta_j \in \B(\ell^p_{nd_j})$ such that 
\begin{equation}\label{hor_block_norms}
\|\vf{b}([j], \colon)\|_{M_{d_j,nd}^p(\C)} =\| \vf{b}_{[j]} \|_{M_{d_j,nd_j}^p(\C)}=\| (\vf{b}_{[j]} \mid \vf{a}_{\zeta_j})_{M_{d_j}^p(\C)}\|,
\end{equation}
where $\vf{a}_{\zeta_j} \in M_{nd_j, d_j}^p(\C)$ is as defined in Equation $\eqref{a_zetas}$. As before, we let $\zeta_j = (\zeta_{j1}, \zeta_{j2}, \ldots, \zeta_{jn})$
be the decomposition of $\zeta_j$ into its $n$ $j$-th block parts, so that each $\zeta \in \ell_{d_j}^p$ and
\[
\vf{a}_{\zeta_j} = 
\begin{bmatrix}
    \vf{a}_{\zeta_{j1}}\\
    \vdots\\
    \vf{a}_{\zeta_{jn}}
\end{bmatrix}
\]
with $a_{\zeta_{j,l}} \in M_{d_j}^p(\C)$ for any $l \in \{1, \ldots, n\}$. Now we define 
\[
\vf{a}_0 = 
\begin{bmatrix}
    \op{diag}(\vf{a}_{\zeta_{11}},  \ldots, \vf{a}_{\zeta_{k1}}) \\
    \op{diag}(\vf{a}_{\zeta_{12}}, \ldots, \vf{a}_{\zeta_{k2}}) \\
    \vdots\\
    \op{diag}(\vf{a}_{\zeta_{1n}},  \ldots, \vf{a}_{\zeta_{kn}}) 
\end{bmatrix}\in M_{n,1}(A_{c(d,k)}).
\]
Moreover, notice that $(\vf{a}_0)_{[j]}=\vf{a}_{\zeta_j}$, whence
\[
\|\vf{a}_{0}(\colon, [j]) \|_{M_{nd,d_j}^p} = \|(\vf{a}_0)_{[j]}\|_{M_{nd_j,d_j}^p(\C)} = \| \vf{a}_{\zeta_j}\|_{M_{nd_j,d_j}^p(\C)} = \| \zeta_j\|_p.
\]
Lemma \ref{KeyBlockNorms} now gives
\[
\|\vf{a}_0 \|_{M_{nd,d}^p(\C)}=\op{max}\{ \| \zeta_j\|_p \colon j \in \{1,\ldots, k\}\} \leq 1,
\]
that is, $\vf{a}_0 \in \B(M_{n,1}(A_{c(d,k)}))$. Finally, 
a direct computation gives 
\[
( \vf{b} \mid \vf{a}_{0})_{A_{c(d,k)}} 
= \op{diag}\big(
(\vf{b}_{[1]} \mid \vf{a}_{\zeta_1})_{M_{d_1}^p(\C)}, 
\ldots, 
 (\vf{b}_{[k]} \mid \vf{a}_{\zeta_k}
 )_{M_{d_k}^p(\C)}
  \big),
\]
so two final applications of Lemma \ref{KeyBlockNorms}
together with Equation \eqref{hor_block_norms} now give 
\[
\| ( \vf{b} \mid \vf{a}_{0})_{A_{c(d,k)}} \| = \| \vf{b}\|_{M_{d,nd}^p(\C)},
\]
from where we conclude that \eqref{leqB} holds when $A=A_{c(d,k)}$, finishing the proof.
\end{proof}

We end this section by observing that the same proofs we used in Theorems \ref{MainT1}
and \ref{MainT2} can be easily adapted to the standard $p$-module case from Example \ref{INFcolrow}. 

Indeed, let $d \in \Z_{\geq1}$ and $p \in [1,\infty)$. If $A \subseteq M_d^p(\C)$, then an element of $M^p_{\infty, 1}(A)$ is a sequence of matrices in $A$, indexed by $\Z_{\geq 1}$, such that when arranged as a `$\infty \times d$' matrix its $d$ columns are elements of $\ell^p(\Z_{\geq 1})$. Dually, an element of $M^p_{1,\infty}(A)$ is also a sequence of matrices in $A$, indexed by $\Z_{\geq 1}$, such that when arranged as a `$d \times \infty$' matrix its $d$ rows are elements of $\ell^{p'}(\Z_{\geq 1})$. Therefore, we can now state our most general result as follows:

    \begin{theorem}\label{MainTT}
    Let $p \in [1, \infty)$, let $n \in \Z_{\geq 1} \cup \{\infty\}$, let $d \in \Z_{\geq 1}$, and let $c(d,k)$ be a fixed composition of $d$.
    Then $(\ M^p_{1,n}(A_{c(d,k)}),M^p_{n,1}(A_{c(d,k)}) \ )$ is a C*-like $L^p$-module over $A_{c(d,k)}$.
\end{theorem}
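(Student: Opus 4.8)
The plan is to observe that for $n \in \Z_{\geq 1}$ the statement is exactly Theorem \ref{MainT2}, and to handle the only genuinely new case, $n = \infty$, by approximating infinite row and column vectors by their finite truncations and transporting the conclusion of Theorem \ref{MainT2} to the limit. Writing $A = A_{c(d,k)}$, by Remark \ref{DefinitionLpModRmks} it suffices to verify the two C*-likeness inequalities, namely that $\|\vf{a}\| \leq \sup\{\|(\vf{b}\mid\vf{a})_A\| \colon \vf{b}\in\B(M^p_{1,\infty}(A))\}$ for every $\vf{a}\in M^p_{\infty,1}(A)$, together with the dual inequality obtained by exchanging the roles of rows and columns. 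For $N\in\Z_{\geq1}$ I will write $\vf{a}^{(N)}$ and $\vf{b}^{(N)}$ for the truncations keeping only the first $N$ blocks.

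First I would prove a truncation-convergence lemma: $\|\vf{a}-\vf{a}^{(N)}\|\to 0$ and $\|\vf{b}-\vf{b}^{(N)}\|\to 0$ as $N\to\infty$. For $\vf{a}$ this is direct from the description in Example \ref{INFcolrow}: $\vf{a}$ is an operator out of the finite-dimensional space $\ell_d^p$ whose $d$ columns lie in $\ell^p(\Z_{\geq1})$, so all column tails vanish simultaneously, and a Hölder estimate on $\|(\vf{a}-\vf{a}^{(N)})\xi\|_p$ over $\xi\in\B(\ell_d^p)$ upgrades this to convergence in the operator norm. For $\vf{b}$ the domain is infinite-dimensional, so I would pass to the transpose: $\vf{b}^T$ has its columns in $\ell^{p'}(\Z_{\geq1})$ and finite-dimensional domain $\ell_d^{p'}$, the $\vf{a}$-argument applies to it, and Corollary \ref{a_p=a^T_q} turns $\|\vf{b}-\vf{b}^{(N)}\|$ into $\|\vf{b}^T-(\vf{b}^{(N)})^T\|$, yielding the claim.

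Next I would assemble the first inequality. Fixing $\vf{a}\in M^p_{\infty,1}(A)$, Theorem \ref{MainT2} applied to $\vf{a}^{(N)}\in M^p_{N,1}(A)$ produces a witness $\vf{b}_0\in\B(M^p_{1,N}(A))$ with $\|(\vf{b}_0\mid\vf{a}^{(N)})_A\|=\|\vf{a}^{(N)}\|$. Padding $\vf{b}_0$ with zero blocks gives $\widetilde{\vf{b}}\in\B(M^p_{1,\infty}(A))$ of the same norm, and because $\widetilde{\vf{b}}$ is supported on the first $N$ blocks the pairing is unchanged, $(\widetilde{\vf{b}}\mid\vf{a})_A=(\vf{b}_0\mid\vf{a}^{(N)})_A$. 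Hence the supremum on the right is at least $\|\vf{a}^{(N)}\|$ for every $N$, and letting $N\to\infty$ with the truncation lemma gives the bound $\|\vf{a}\|$. The dual inequality follows in the same manner, using the column witnesses $\vf{a}_0$ from Theorem \ref{MainT2} and the truncation convergence for $\vf{b}$.

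The hard part will be the truncation-convergence step for the row vectors $\vf{b}$: since $M^p_{1,\infty}(A)$ acts on the infinite-dimensional space $\ell^p(\Z_{\geq1})\otimes_p\ell_d^p$, compactness of the unit ball is unavailable, and it is exactly the transpose maneuver through Corollary \ref{a_p=a^T_q} that rescues the argument by reducing it to the finite-dimensional-domain case already handled for $\vf{a}$. Once both norm convergences are in hand, the remaining passages to the limit are routine.
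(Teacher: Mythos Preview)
Your approach differs from the paper's: the paper does not argue by truncation at all, but simply asserts that the explicit witnesses $\vf{b}_\eta$ and $\vf{a}_\zeta$ from the proofs of Theorems \ref{MainT1} and \ref{MainT2} make sense verbatim when $n=\infty$ (with $\eta$ ranging over $\B(\ell^{p'}(\Z_{\geq 1}\times\{1,\dots,d\}))$ and $\zeta$ over $\B(\ell^{p}(\Z_{\geq 1}\times\{1,\dots,d\}))$), so no limiting procedure is needed. Your truncation route is a legitimate alternative, but it has a genuine gap at $p=1$.

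The problem is your truncation-convergence lemma for $\vf{b}$. You claim $\|\vf{b}-\vf{b}^{(N)}\|\to 0$ via the transpose, arguing that $\vf{b}^T$ has columns in $\ell^{p'}(\Z_{\geq 1})$ with finite-dimensional domain. But when $p=1$ we have $p'=\infty$, and tails of $\ell^\infty$ sequences need not vanish: take $\vf{b}=(1,1,1,\dots)\otimes 1_A\in M^1_{1,\infty}(A)$, for which $\|\vf{b}-\vf{b}^{(N)}\|=1$ for every $N$. So the norm-convergence claim is simply false at $p=1$. (A minor related point: Corollary \ref{a_p=a^T_q} is stated only for finite $d,n$, so you would in any case need to invoke the general duality fact the paper mentions just before it.)

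The fix is to notice that you never actually need $\|\vf{b}-\vf{b}^{(N)}\|\to 0$; you only need $\|\vf{b}^{(N)}\|\to\|\vf{b}\|$. This weaker statement holds for every $p\in[1,\infty)$: one has $\|\vf{b}^{(N)}\|\leq\|\vf{b}\|$ since $\vf{b}^{(N)}=\vf{b}P_N$ with $P_N$ a contractive projection, and conversely finitely supported vectors are dense in the domain $\ell^p(\Z_{\geq 1})\otimes_p\ell_d^p$, so for any $\varepsilon>0$ one can pick a finitely supported $\zeta$ in the unit ball with $\|\vf{b}\zeta\|>\|\vf{b}\|-\varepsilon$, and then $\vf{b}^{(N)}\zeta=\vf{b}\zeta$ once $N$ exceeds the support. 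With this correction your argument goes through for all $p\in[1,\infty)$; the rest of the plan (padding finite witnesses by zeros and passing to the limit) is sound.
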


\section{Counterexamples: non C*-like modules over $M_d^p(\C)$}\label{CounterExs}

It is clear that any algebra that satisfies the hypothesis of Theorem \ref{MainTT} is a unital subalgebra of $M_d^p(\C)$. It is easy to find examples of the C*-likeness  failing when we drop the unital assumption: 

\begin{example}\label{ex_UpperTriang}
    Let $p \in [1, \infty)$, and let $A=T^p_2 \subset M_2^p(\C)$ be the algebra of strictly upper triangular matrices. That is, 
    \[
    A=\left\{
    \begin{bmatrix}
        0 & \lambda \\
        0 & 0 
    \end{bmatrix} \colon \lambda \in \C
    \right\}.
    \]
    Then, for any $n \in \Z_{\geq 1} \cup\{\infty\}$, if $(\vf{b},\vf{a}) \in  M^p_{1,n}(A)\times M^p_{n,1}(A)$ we have $\vf{b}\vf{a}=0\in A$. Hence, the C*-likeness condition \eqref{C*likeness1} only holds when $\vf{a}$ is 0 and similarly \eqref{C*likeness2} is true only when $\vf{b}$ is zero. This shows that  $(M^p_{1,n}(A),M^p_{n,1}(A))$ is never a C*-like module over $A$. In fact the case $n=1$ was already considered in Example \ref{AA}.
\end{example}

Unfortunately, $A$ being unital is not a sufficient condition for C*-likeness. 
In the following example we use the well-known fact that the $1$-operator norm of any complex valued matrix is the maximum $1$-norm of its columns to give an example of a unital subalgebra of $M_2^1(\C)$ such that $(M^1_{1,2}(A),M^1_{2,1}(A))$ is not C*-like. 

\begin{example}\label{SD}
Let $u \in M_2^1(\C)$ be the normalized Hadamard matrix of dimension 2. That is, 
\[
u=\frac{1}{\sqrt{2}}\begin{bmatrix}
    1 & 1 \\
    1 & -1
\end{bmatrix}.
\]
Now, let $A \subset M^1_2(\C)$ be the algebra of simultaneously diagonalizable matrices 
with basis of eigenvectors given by the columns of $u$. That is, 
\[
A = \{u \ \op{diag}(\lambda_1, \lambda_2) \ u^{-1} \colon \lambda_1, \lambda_2 \in \C\}. 
\]
Observe that $A$ is indeed unital. Next, we define $a_1, a_2 \in A$ by 
\[
a_1=u \ \op{diag}(2, 1) \ u^{-1}, a_2=u \ \op{diag}(1, 2) \ u^{-1}. 
\]
Since 
\[
\vf{a} = \begin{bmatrix}
    a_1\\
    a_2
\end{bmatrix}
= 
\frac{1}{2}
\begin{bmatrix}
    3 & 1 \\
    1 & 3 \\
    3 & -1\\
    -1&3
\end{bmatrix} \in M^1_{2,1}(A),
\]
we see at once that $\|\vf{a}\|_{M_{4,2}^1(\C)}=4$. On the other hand, observe that any $\vf{b} \in \B(M^1_{1,2}(A))$ looks like 
\[
\vf{b} = \frac{1}{2}
\begin{bmatrix}
    \lambda_1 + \lambda_2 & \lambda_1-\lambda_2 & \lambda_3+\lambda_4 & \lambda_3-\lambda_4 \\
        \lambda_1 - \lambda_2 & \lambda_1+\lambda_2 & \lambda_3-\lambda_4 & \lambda_3+\lambda_4 \\
\end{bmatrix},
\]
where $\lambda_1, \lambda_2, \lambda_3, \lambda_4 \in \C$ satisfy 
\begin{align*}
     |\lambda_1 + \lambda_2| + |\lambda_1-\lambda_2 | & \leq 2,\\
          |\lambda_3 + \lambda_4| + |\lambda_3-\lambda_4 | & \leq 2.
\end{align*}
For any such $\vf{b}$, we have 
\[
\vf{b}\vf{a} = \frac{1}2
\begin{bmatrix}
    2\lambda_1 + \lambda_2 + \lambda_3 + 2\lambda_4 & 2\lambda_1 - \lambda_2 + \lambda_3 - 2\lambda_4 \\
    2\lambda_1 - \lambda_2 + \lambda_3 - 2\lambda_4 & 2\lambda_1 + \lambda_2 + \lambda_3 + 2\lambda_4
\end{bmatrix},
\]
whence 
\[
\|( \vf{b} \mid \vf{a} ) \|_A = \frac{1}2| 2\lambda_1 + \lambda_2 + \lambda_3 + 2\lambda_4 |+\frac{1}{2}|2\lambda_1 - \lambda_2 + \lambda_3 - 2\lambda_4|.
\]
We claim the following:
\begin{claim}
$\max\{\|( \vf{b} \mid \vf{a} ) \|_A  \colon \vf{b} \in \B(M^1_{1,2}(A))\}=\sqrt{10}$.
\end{claim}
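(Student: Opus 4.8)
The plan is to read the Claim as a finite-dimensional optimization: for each admissible $\vf{b}$ the quantity $\|(\vf{b}\mid\vf{a})\|_A$ equals
$f(\lambda_1,\lambda_2,\lambda_3,\lambda_4)=\frac12|2\lambda_1+\lambda_2+\lambda_3+2\lambda_4|+\frac12|2\lambda_1-\lambda_2+\lambda_3-2\lambda_4|$, and the condition $\vf{b}\in\B(M^1_{1,2}(A))$ is exactly the pair of constraints $|\lambda_1+\lambda_2|+|\lambda_1-\lambda_2|\le 2$ and $|\lambda_3+\lambda_4|+|\lambda_3-\lambda_4|\le 2$. So the Claim asserts that the maximum of $f$ over this constraint set is $\sqrt{10}$. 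First I would change variables to $s_1=\lambda_1+\lambda_2$, $d_1=\lambda_1-\lambda_2$, $s_2=\lambda_3+\lambda_4$, $d_2=\lambda_3-\lambda_4$, which turns the constraints into the complex $\ell^1$-ball conditions $|s_1|+|d_1|\le 2$ and $|s_2|+|d_2|\le 2$, and rewrites $f=\frac14\big(|3s_1+d_1+3s_2-d_2|+|s_1+3d_1-s_2+3d_2|\big)$.

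Next I would invoke convexity. As a sum of moduli of complex-linear forms, $f$ is a convex function of the real and imaginary parts, and the feasible region is a compact convex set (a product of two $\ell^1$-balls), so the maximum is attained at an extreme point. The extreme points of the complex $\ell^1$-ball $\{|s|+|d|\le 2\}$ are precisely the points with one nonzero coordinate of modulus $2$, namely $(2e^{i\theta},0)$ and $(0,2e^{i\theta})$; I would justify this by the standard perturbation argument (if both coordinates are nonzero one may shift modulus from one to the other while staying on the boundary, exhibiting the point as a proper midpoint). Since the region is a product, its extreme points are products of these, leaving four cases according to whether $s_j$ or $d_j$ is active for $j=1,2$.

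In each case the objective collapses to a single phase difference. When $(s_1,d_1)=(2e^{i\alpha},0)$ and $(s_2,d_2)=(2e^{i\beta},0)$ one gets $f=3|\cos\psi|+|\sin\psi|$ with $\psi=\tfrac{\alpha-\beta}2$, whose maximum is $\sqrt{9+1}=\sqrt{10}$; the case with both active coordinates the $d_j$ is symmetric. In the mixed case $(s_1,d_1)=(2e^{i\alpha},0)$, $(s_2,d_2)=(0,2e^{i\beta})$ one gets $f=\frac12\big(\sqrt{10-6\cos\phi}+\sqrt{10+6\cos\phi}\big)$ with $\phi=\alpha-\beta$, and squaring shows the maximum occurs at $\cos\phi=0$, again giving $\sqrt{10}$; the other mixed case is symmetric. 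This establishes the upper bound $f\le\sqrt{10}$.

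For the matching lower bound I would exhibit an explicit maximizer: $\lambda_1=\lambda_2=(3+i)/\sqrt{10}$ and $\lambda_3=\lambda_4=(3-i)/\sqrt{10}$ satisfy both constraints with equality and give $2\lambda_1+\lambda_2+\lambda_3+2\lambda_4=18/\sqrt{10}$ and $2\lambda_1-\lambda_2+\lambda_3-2\lambda_4=2i/\sqrt{10}$, so $f=\frac12(18/\sqrt{10}+2/\sqrt{10})=\sqrt{10}$. The main obstacle is the upper bound: naive estimates such as the triangle inequality, or the Cauchy–Schwarz bound $f\le\sqrt{|2\lambda_1+\lambda_3|^2+|\lambda_2+2\lambda_4|^2}$, only yield $4$ or $3\sqrt2$, since they ignore the coupling forced by the constraints — making $|2\lambda_1+\lambda_3|$ large requires $\lambda_2$ small, which weakens $\lambda_2+2\lambda_4$. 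It is exactly this tradeoff, together with the fact that the optimum is genuinely complex (over real $\lambda_i$ the value never exceeds $3$), that the extreme-point reduction captures and that the one-parameter optimizations quantify.
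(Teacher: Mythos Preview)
Your proof is correct and follows essentially the same approach as the paper's: both reparametrize via $w_j=(\lambda_j\pm\lambda_{j+1})/2$ (your $s_j,d_j$ are just $2w_{2j-1},2w_{2j}$), invoke convexity of the objective to reduce to extreme points, and then check the resulting one-phase optimizations to get $\sqrt{10}$. Your version is slightly tidier in that you identify the extreme points of the complex $\ell^1$-ball directly (so you need not consider the vacuous cases where a block is zero), and you supply an explicit maximizer, which the paper does not.
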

\begin{claimproof}
By the discussion above, finding $\max\{\|( \vf{b} \mid \vf{a} ) \|_A  \colon \vf{b} \in \B(M^1_{1,2}(A))\}$ require us to solve 
the following optimization problem:
\[
\max\left\{ \frac{1}2| 2\lambda_1 + \lambda_2 + \lambda_3 + 2\lambda_4 |+\frac{1}{2}|2\lambda_1 - \lambda_2 + \lambda_3 - 2\lambda_4| \ 
\colon \
\begin{matrix}
    |\lambda_1 + \lambda_2| + |\lambda_1-\lambda_2 |  \leq 2\\
      |\lambda_3 + \lambda_4| + |\lambda_3-\lambda_4 |  \leq 2
\end{matrix}
\right\}.
\]
Multiplying through by $2$ and reparameterizing as
\begin{align*}
    \lambda_1&=w_1+w_2=r_1e^{i\theta_1}+r_2e^{i\theta_2},\\
    \lambda_2&=w_1-w_2=r_1e^{i\theta_1}-r_2e^{i\theta_2},\\
    \lambda_3&=w_3+w_4=r_3e^{i\theta_3}+r_4e^{i\theta_4},\\
    \lambda_4&=w_3-w_4=r_3e^{i\theta_3}-r_4e^{i\theta_4},
\end{align*}
where $w_j\in\C,\theta_j\in\R, r_j\in\R_+$ for all $j \in \{1, \ldots, 4\}$,
the claim becomes equivalent to proving 
\[
    \max_{\substack{r_1+r_2\leq 1 \\ r_3+r_4\leq 1}}|3r_1e^{i\theta_1}+r_2e^{i\theta_2}+3r_3e^{i\theta_3}-r_4e^{i\theta_4}|+
|r_1e^{i\theta_1}+3r_2e^{i\theta_2}-r_3e^{i\theta_3}+3r_4e^{i\theta_4}|  = 2\sqrt{10}.
\]
The advantage now is that the objective function is convex in $r_1,r_2,r_3,r_4$ for fixed $\theta_1,\theta_2,\theta_3,\theta_4$.
Thus, it suffices to check only the extreme points of the polytope bounding the $r_1, r_2,r_3,r_4$:
\begin{equation*}
    ((r_1,r_2),(r_3,r_4))\in\{(0,0),(1,0),(0,1)\}^2.
\end{equation*}
By symmetry, there are only four distinct cases to check for the values of $(r_1,r_2,r_3,r_4)$:
\begin{enumerate}
\item $(0,0,0,0)$: The objective value is $0$.
\item $(0,0,1,0)$: The objective value is $4$.
\item $(1,0,1,0)$: Without loss of generality, we can rotate so that $\theta_1=0$, i.e. $w_1$ is real. The problem reduces to
    \begin{equation*}
        \max_\theta |3+3e^{i\theta}|+|1-e^{i\theta}|=\max_\theta \sqrt{18+18\cos\theta}+\sqrt{2-2\cos\theta}= 2\sqrt{10},
    \end{equation*}
maximized at $\theta=\arccos(4/5)$.
\item $(1,0,0,1)$: Again, rotate so that $\theta_1=0$. The problem reduces to
    \begin{equation*}
        \max_\theta |3-e^{i\theta}|+|1+e^{i\theta}|=\max_\theta \sqrt{10-6\cos\theta}+\sqrt{10+6\cos\theta} =2\sqrt{10},
    \end{equation*}
maximized at $\theta=\frac{\pi}{2}$.
\end{enumerate}
This proves the claim.
\end{claimproof}

Hence, using the claim we get  at once 
\[
\max\{\|( \vf{b} \mid \vf{a} ) \|_A  \colon \vf{b} \in \B(M^1_{1,2}(A))\}= \sqrt{10} < 4 = \|\vf{a}\|_{M_{4,2}^1(\C)}.
\]
Therefore, C*-likeness condition \eqref{C*likeness1} does not hold, showing that $(M_{1,2}^1(A),M_{2,1}^1(A))$ is not a C*-like module. 
\end{example}

We do not know yet whether Example \ref{SD}  works for $p \in (1,\infty)\setminus \{2\}$. Of course, for $p=2$, $A$ is a C*-algebra and therefore the example does not work (see Lemma \ref{C*innerRec}). We are currently working with both numerical computations and interpolation techniques to investigate Example \ref{SD} when $p \in (1,\infty)\setminus \{2\}$. 

\subsection*{Funding} 

All authors where partially supported by the Department of Mathematics at the University of Colorado, Boulder via the Summer 2024 REU(G) program.

\bibliographystyle{plain}
\bibliography{cstarpnorms.bib}
\end{document}